\newtheorem{theorem} {{\textsf{Theorem}}}
\newtheorem{proposition}[theorem]{{\textsf{Proposition}}}
\newtheorem{corollary}[theorem]{{\textsf{Corollary}}}
\newtheorem{conj}[theorem]{{\textsf{Conjecture}}}
\newtheorem{definition}[theorem]{{\textsf{Definition}}}
\newtheorem{remark}[theorem]{{\textsf{Remark}}}
\newtheorem{example}[theorem]{{\textsf{Example}}}
\newtheorem{lemma}[theorem]{{\textsf{Lemma}}}
\newtheorem{question}[theorem]{{\textsf{Question}}}
\newcommand{\lk}{\mbox{\upshape lk}\,}
\newcommand{\Sp}{\mathbb{S}}
\begin{document}
\title{Simplicial degree $d$ self-maps on $n$-spheres}
\author{Biplab Basak$^1$, Raju Kumar Gupta$^*$, and Ayushi Trivedi}
\date{}
\maketitle
\vspace{-10mm}
\begin{center}

\noindent {\small Department of Mathematics, IIT Delhi, Hauz Khas, New Delhi 110016.}

\noindent {\small Mathematical Institute, Slovak Academy of Sciences, Bratislava, 81438$^{*}$.}

\noindent {\small {\em E-mail addresses:} \url{biplab@iitd.ac.in}, \url{rajukrg3217@gmail.com}$^{*}$ \url{Ayushi.Trivedi@maths.iitd.ac.in}}

\medskip

\date{\today}
\end{center}
\footnotetext[1]{Corresponding author}
\hrule

\begin{abstract}
The degree of a map between orientable manifolds is a fundamental concept in topology, providing deep insights into the structure of manifolds and the behavior of maps between them. Recently, this notion has been extensively studied, particularly in the context of simplicial maps between orientable triangulable spaces. 

In this paper, we focus on the construction of non-degenerate simplicial maps of degree $d\in \mathbb{Z}$ on $n$-spheres for $n\geq 2$. We develop a general method, based on connected sums and facet orientations, to construct simplicial maps of any prescribed degree $d \in \mathbb{Z}$ between triangulated spheres. 
We investigate the asymptotic behavior of $\Lambda(n,d)$, defined as the minimum number of vertices required for a triangulated $n$-sphere to admit a simplicial map of degree $d$ to $\Sp^n_{n+2}$, for $n \geq 3$ and $d \geq 1$. As a consequence, we answer a question posed by Ryabichev in \cite{Ryabichev}. In addition to vertex-minimal constructions, we obtain facet-minimal degree maps for large degrees. Specifically, for each $d \geq n^2 + 1$, we construct a simplicial map of degree $d$ from a triangulated $n$-sphere with $d(n+2)$ facets to $\Sp^n_{n+2}$, for $n \geq 3$. 

As an application of the constructions, we derive improved bounds on the covering type of Moore spaces, refining results from \cite{Govc}. Finally, we conclude with several open questions that may be of independent interest.

\end{abstract}

\noindent {\small {\em MSC 2020\,:}  Primary 05E45; Secondary 57Q15, 55M25, 52B70.
	
\noindent {\em Keywords:} Simplicial map, Degree of simplicial maps, Triangulations of $n$-spheres, covering type, Moore spaces}

\section{Introduction}
The interplay between topology and combinatorics has been a rich area of mathematical study, providing insights that benefit both disciplines. This connection is particularly evident in the concept of triangulations of manifolds, which serve as a bridge between these two fields. Triangulations are useful in both theoretical and computational mathematics.
It involves the decomposition of a manifold into simpler geometric shapes, known as simplices, which allows for a combinatorial approach to studying the shape and structure of topological spaces. The construction of new triangulations and the exploration of combinatorial aspects of topological spaces and their corresponding maps have captivated researchers for several decades. 

Let $X$ and $Y$ be two closed orientable $n$-manifolds.
It is known that $H_n(X, \mathbb{Z}) \cong  H_n(Y, \mathbb{Z}) \cong  \mathbb{Z}$. Let $f:X\to Y$ be a continuous map. The induced homomorphism $f_{*}: H_n(X, \mathbb{Z}) \rightarrow H_n(Y, \mathbb{Z})$ can be expressed as $f_{*}([X]) = d \cdot [Y]$, where $[X]$ and $[Y]$ are the generators of the $n$th homology groups of $X$ and $Y$, respectively. The integer $d$ is called the degree of the map $f$.
The degree of a map $f: X \rightarrow Y$ is one of the fundamental invariants for studying continuous maps between closed orientable manifolds. Degree of a map bridges various areas of topology, geometry, and mathematical physics. In mathematical physics, the degree can signify quantities like the winding number, topological charge, or the flux of a field through a manifold. The study of such degree maps has a long history in topology, originating from Gauss’s ideas in his proof of the fundamental theorem of algebra.  The theory of degree maps was developed in the works of Hopf \cite{Hopf1928, Hopf1930} and further advanced by Olum \cite{Olum1953} and Epstein \cite{Epstein1966}. 
Recently, in \cite{Ryabichev2024}, the author proved that for given closed, possibly nonorientable surfaces $M$ and $N$, if a map $f : M \to N$ has geometric degree $d > 0$, then $\chi(M) \leq d \cdot \chi(N)$. This categorizes the potential surfaces that can have a degree map defined between them. Some recent study on degree maps between manifolds can be found in \cite{Amann, Neofytidis, Neofytidis2}. 

Recently, the study of maps between manifolds in a combinatorial setup has seen some interest  (see \cite{Apolonskaya,BasakTrivedi,Milizia,Musin,Ryabichev}).
The combinatorial version of the study on degree maps was initiated by Fan in \cite{Fan1967},  where he studied the simplicial maps from an orientable $n$-pseudomanifold to an $m$-sphere with the octahedral triangulation. Let $\Sp^n_{n+2}$ denote the standard $n$-sphere with $n+2$ vertices. In \cite{Madahar2001},  for a given $d\in \mathbb{Z}$,  authors, constructed triangulation of a $2$-sphere $K$ on $2|d|+2$ vertices such that there exists a simplicial map $f: K \rightarrow \Sp^2_{4}$ of degree $d$. They also proved that the triangulations obtained are vertex minimal.

Let $K$ be an oriented triangulated $n$-sphere, and let $f:K \to \Sp^n_{n+2}$ be a simplicial map. We call the map $f$ \emph{non-degenerate} if it is surjective and maps $n$-simplices of $K$ onto $n$-simplices of $\Sp^n_{n+2}$; otherwise, we call it \emph{degenerate}. If a simplicial degree map $f:K \to \Sp^n_{n+2}$ is non-degenerate, then the $1$-skeleton of $K$ is properly $(n+2)$-colorable, which need not hold in the degenerate case.
In this article, we study simplicial degree maps that are non-degenerate.

In Section~\ref{preliminaries}, we present the basic definitions and known results required for the article. In Section~\ref{section2}, we begin by describing a construction method using connected sums and orientations on simplices of triangulated spheres induced from the orientation on the standard sphere $\Sp^n_{n+2}$ and the simplicial map. This method allows us to construct simplicial degree $d = d' + d''$ maps from given simplicial degree maps of degrees $d'$ and $d''$, for $n \geq 2$ and $d \geq 1$. We first apply this method to construct triangulated $2$-spheres that induce simplicial degree $d$ maps for all $d\geq 1$ (see  Proposition \ref{2sphere}). The resulting $2$-spheres differ combinatorially from those in \cite{Madahar2001}.
Next, we construct simplicial degree $d$ maps $f:K \to \Sp^n_{n+2}$ for every $d\in \mathbb{Z}$, where $K$ is a triangulated $n$-sphere (see Theorem~\ref{thm:mainconstruction}).

Fix $n,d \geq 1$. Let $\lambda(n,d)$ (respectively, $\Lambda(n,d)$) denote the minimum number of vertices required for a triangulated $n$-sphere to admit a simplicial degree map (respectively, a non-degenerate simplicial degree map) $f:K \to \Sp^n_{n+2}$ of degree $d$. Clearly, $\lambda(n,d) \leq \Lambda(n,d)$ for all $n,d \geq 1$. It is known that $\lambda(1,d) = \Lambda(1,d) = 3d$, and from \cite{Madahar2001}, $\lambda(2,d) = \Lambda(2,d) = 2d + 2$ for $d \geq 1$.

In \cite{Ryabichev} (see Proposition~1), the author used the join operation to construct triangulated spheres with fewer vertices that induce maps of large degree. However, due to the use of the join operation, these maps are degenerate. The author obtained the bound $\lambda(n,d_1d_2) \leq 3d_1 + 3d_2 + n - 2$ and also proved that $\limsup_{d \to \infty} \frac{\lambda(n,d)}{d} = 0$. This suggests that determining the exact values of $\lambda(n,d)$ for given $n$ and $d$ is extremely challenging.

In this article, we provide a lower bound for $\Lambda(n,d)$ for $n \geq 3$ and study its asymptotic behavior with respect to $n$ and $d$ (see Subsection~\ref{subsection 1}).
In \cite{Ryabichev}, the author also proved that
$\limsup_{d \to \infty} \frac{(\lambda(n,d))^{l}}{d} = 0$
for $l < \lfloor \frac{n+1}{2} \rfloor$, and posed the following question:

\begin{question}
Is 
$\limsup_{d \to \infty}
\frac{(\lambda(n,d))^{\lfloor \frac{n+1}{2} \rfloor}}{d} \neq 0$
for $n \geq 3$?
\end{question}

In this article, we answer this question in the affirmative (see Theorem~\ref{openquestion}).

If $f:K \to \Sp^n_{n+2}$ is a simplicial map of degree $d\in \mathbb{Z}$, where $n \geq 1$, then $K$ must have at least $d(n+2)$ facets. It is clear that  $f$ is a \emph{facet-minimal simplicial degree map} if $K$ has exactly $d(n+2)$ facets. 

Apart from exploring vertex minimality, we obtain several interesting results on facet-minimal simplicial degree maps. We prove that there exists a simplicial degree map 
$g:L \to \Sp^n_{n+2}$ 
of degree $d$ for $d \geq n^2+1$, $d = ln+1$ (for $l \geq 1$), and $d = n(n+1-r)+r$ (for $1 \le r \le n$), such that $f_n(L) = d(n+2)$ (see Theorems~\ref{thm:mainconstruction} and~\ref{facetminimal}). We also show that such maps are not realizable for degrees $d = 2$ and $d = 3$.

In Section~\ref{section3}, we characterize simplicial degree maps and use this characterization to strengthen a result of Planken \cite{Planken}. Planken proved that if the $1$-skeleton of a triangulated $n$-sphere is $(n+2)$-colorable, then there exists a subdivision of the sphere in which, for every $(n-2)$-simplex, the number of incident $(n-1)$-simplices is divisible by three.
In particular, we show that there exists a subdivision of such triangulated spheres in which, for every $(n-2)$-simplex, the number of incident $(n-1)$-simplices is divisible by three, and for every $(n-3)$-simplex, the number of incident $(n-2)$-simplices is even and cannot be equal to $6$.

In Section~\ref{section4}, we present an application of our constructions of simplicial degree maps. 
Karoubi and Weibel \cite{Karoubi} introduced the notion of the \emph{strict covering type} of a space $X$, defined as the minimal cardinality of a good cover of $X$. Since the strict covering type is a geometric notion and not homotopy invariant, they further defined the \emph{covering type} of $X$ by $ct(X) := \min\{\mathrm{sct}(Y) \mid Y \simeq X\}$.
They also proved (see \cite[Theorem 2.5]{Karoubi}) that the covering type of a finite CW complex is equal to the minimal cardinality of a good closed cover of some CW complex homotopy equivalent to $X$. In \cite{Govc}, the authors obtained upper bounds on the covering type of Moore spaces (see Proposition~4.6 and Corollary~4.7 in \cite{Govc}). In this article, we significantly improve these bounds.
We use the fact that if $f:K \to \Sp^n_{n+2}$ is a simplicial degree map of degree $d$, then its mapping cone is a Moore space of type $M(\mathbb{Z}_d,n)$. Using the technique of simplicial mapping cones over $f$, as described in \cite{Tinarrage}, we construct a simplicial complex with $f_0(K) + n + 3$ vertices that is homotopy equivalent to the mapping cone of $f$. The results are presented in Propositions~\ref{pr:triangulation of moore space} and~\ref{pr2:Moore Spaces}.

Finally, in Section \ref{section5}, we conclude by addressing some interestiong open questions.

\section{Basic Terminologies}\label{preliminaries}
   A {\em $d$-simplex} is a $d$-dimensional polytope with exactly $d+1$ vertices. A simplex generated by the vertices $v_1, v_2, \ldots, v_{n},$ and $v_{n+1}$ is denoted by $[v_1 v_2 \cdots v_{n+1}]$. A {\em face} $\tau$ of a simplex $\sigma$ is the convex hull of a non-empty subset of the vertex set of $\sigma$ and is denoted by $\tau \leq \sigma$. A {\em simplicial complex} $K$ is a finite collection of simplices in $\mathbb{R}^m$ for some $m \in \mathbb{N}$ such that for any simplex $\sigma$, all of its faces are in $K$, and for any two simplices $\sigma$ and $\tau$ in $K$, either $\sigma \cap \tau$ is empty or a face of both simplices. We assume that the empty set is the face of every simplicial complex. The {\em dimension} of a simplicial complex $K$ is defined as $\max\{\dim(\sigma) : \sigma \in K\}$. The geometric carrier of $K$ is a compact polyhedron $|K|$ and is defined as $|K|:= \bigcup_{\sigma \in K} \sigma$. An {\em $f$-vector} of a $d$-simplicial complex $K$ is a $(d+2)$-tuple $(f_{-1}, f_0, f_1, \ldots, f_d)$ such that $f_{-1} = 1$, and for all $0 \leq i \leq d$, $f_i$ denotes the number of $i$-dimensional simplices in $K$. A {\em subcomplex} $S$ is a simplicial complex $S \subset K$.

    Let $\alpha = [u_1 u_2 \cdots u_{n+1}]$ and $\beta = [v_1 v_2 \cdots v_{m+1}]$ be two simplices of dimensions $n$ and $m$, respectively. Then the {\em join} of $\alpha$ and $\beta$ is the simplex $[u_1 u_2 \cdots u_{n+1} v_1 v_2 \cdots v_{m+1}]$ of dimension $n + m + 1$ and is denoted by $\alpha \star \beta$ or $\alpha \beta$. We define the {\em join} of two simplicial complexes $K_1$ and $K_2$ as $\{\sigma \star \tau : \sigma \in K_1, \tau \in K_2\}$, and we denote this as $K_1 \star K_2$. The dimension of $K_1 \star K_2$ is $\dim(K_1) + \dim(K_2) + 1$. 
The {\em link} of a face $\sigma$ in $K$ is defined as $\{\gamma \in K: \gamma \cap \sigma = \emptyset \text{ and } \gamma \star \sigma \in K\}$ and is denoted by $\operatorname{lk}(\sigma, K)$.

A triangulation of a polyhedron \(X\) is a simplicial complex \(K\) such that \(X\) is PL-homeomorphic to the geometric realization \(|K|\) of \(K\). If $M$ is a PL $n$-manifold and $|K|$ is PL-homeomorphic to $M$ for an $n$-simplicial complex $K$, then we say that $K$ is a triangulation of $M$ or a triangulated $n$-manifold. Let $K$ be a triangulated $n$-manifold with a connected boundary. Then $\partial(K)$ denotes the boundary of $K$ and is a triangulated $(n-1)$-manifold. Furthermore, $\partial(K)$ is the collection of $(n-1)$-faces of $K$ that are contained in exactly one $n$-face. By $\mathbb{S}^n_{n+2}$, we mean the standard oriented triangulation of the $n$-sphere with exactly $(n+2)$ vertices.  Let $K_1$ and $K_2$ be two simplicial complexes of the same dimension $n$. The \emph{connected sum} of $K_1$ and $K_2$, denoted by $K_1 \# K_2$, is obtained by choosing $n$-simplices $\sigma_1 \in K_1$ and $\sigma_2 \in K_2$, removing their interiors, and identifying their boundaries. The resulting complex $K_1 \# K_2$ is again an $n$-dimensional simplicial complex. Moreover, if $K_1$ and $K_2$ are triangulations of the $n$-sphere, then both $K_1 \# K_2$ is a triangulation of the $n$-sphere. Let $K$ be a simplicial complex and let $v \in f_0(K)$. The \emph{one-vertex suspension} of $K$ with respect to $v$, denoted by $\Sigma_v K$, is constructed as follows. Remove the vertex $v$ together with all faces incident to it from $K$. Then introduce two new vertices $x$ and $y$, and add the edge $xy$. Define the link of the edge $xy$ in $\Sigma_v K$ to be the same as the link of $v$ in $K$. Finally, for every face $\sigma$ of $K$ that does not belong to the star of $v$, include the faces $x\sigma$ and $y\sigma$ in $\Sigma_v K$. A simplicial complex $X$ is called vertex minimal if $f_0(X) \leq f_0(Y)$ for all triangulations $Y$ of the topological space $|X|$. An $n$-dimensional simplicial complex $X$ is called facet minimal if $f_n(X) \leq f_n(Y)$ for all triangulations $Y$ of the topological space $|X|$.

 A \emph{cyclic polytope} $C(n,m)$ is the convex hull of $n$ distinct points lying on a rational normal curve in $\mathbb{R}^{m}$. Theodore Motzkin conjectured the Upper Bound Theorem, which relates the $f$-vector of cyclic polytopes of dimension $m$ to the $f$-vector of any triangulated $(m-1)$-sphere. This conjecture was later proved by McMullen. The theorem is stated as follows.
 \begin{proposition}[\cite{mullen}]\label{pr:cyclicpolytope}
    Let $\Delta$ be a triangulation of the $m$-dimensional sphere with $n$ vertices; then for every $0\leq k \leq m$, $f_k(\Delta)\leq f_k(C(n,m+1))$, where $C(n,m+1)$ is the boundary complex of the cyclic polytope of dimension $m+1$ with $n$ vertices.  
\end{proposition}

\begin{proposition}[\cite{Bagchi}]\label{pr:pseudomanifold}
Let $K$ be a triangulated $n$-manifold. Then $f_n(K)\geq nf_0(K)-(n+2)(n-1)$.
\end{proposition}

\subsection{Degree of a simplicial map}

 Let $M$ and $N$ be two closed orientable triangulated $n$-manifolds, and let $f: M \rightarrow N$ be a map. Both $n$-th homology groups $H_n(M, \mathbb{Z}) $ and $H_n(N, \mathbb{Z}) $ are isomorphic to $ \mathbb{Z} $. The simplicial map $f$ induces a homomorphism $f_*: H_n(M, \mathbb{Z}) \to H_n(N, \mathbb{Z}) $. Let $ f_*([a]) = d \cdot [b] $, where $ H_n(M, \mathbb{Z}) = \langle [a] \rangle $ and $ H_n(N, \mathbb{Z}) = \langle [b] \rangle $, and $ d \in \mathbb{Z} $. The integer $d$ is called the degree of the map $f$.

A simplicial map between two simplicial complexes $K$ and $L$ is a map $f: K \rightarrow L$ such that if $[v_1 v_2 \cdots v_{m+1}]$ is a simplex in $K$, then $[f(v_1) f(v_2) \cdots f(v_{m+1})]$ is a simplex in $L$. Let $K$ be an oriented triangulation of an orientable $n$-manifold, i.e., each $n$-simplex is assigned an orientation (ordering of its vertices) such that any two adjacent $n$-simplices induce opposite orientations on their common $(n-1)$-face. Choose an $n$-simplex $\sigma = [v_1 v_2 \cdots v_{n+1}]$ in $K$ and declare it to be positive with respect to the ordering $v_1 < v_2 < \cdots < v_{n+1}$. Any representation of the simplex obtained from the ordered tuple $(v_1, v_2, \ldots, v_{n+1})$ by an even permutation represents the same orientation and is also called positive, whereas a representation obtained by an odd permutation represents the negative orientation. Suppose $\sigma_1 = [v_1 v_2 \cdots v_{i-1} v_i v_{i+1} \cdots v_{n+1}]$ and $\sigma_2 = [v_1 v_2 \cdots v_{i-1} v'_i v_{i+1} \cdots v_{n+1}]$ are two $n$-simplices sharing the common $(n-1)$-simplex $[v_1 v_2 \cdots v_{i-1} v_{i+1} \cdots v_{n+1}]$. Then $\sigma_1$ and $\sigma_2$ have opposite signs; that is, if one is positive, the other is negative with respect to the orderings $v_1 < v_2 < \cdots < v_i < \cdots < v_{n+1}$ and $v_1 < v_2 < \cdots < v'_i < \cdots < v_{n+1}$, respectively. Since \(K\) is oriented, this assignment is consistent; consequently, each \(n\)-simplex of \(K\) is uniquely classified as either positive or negative with respect to a chosen vertex ordering (up to even permutations).

\smallskip

 Let $K$ be an oriented triangulation of the $n$-sphere and let $f: K \rightarrow \mathbb{S}^n_{n+2}$ be a simplicial map. If $f$ is not surjective, then the degree of $f$ is zero. Moreover, if $f$ maps an $n$-simplex $\sigma \in K$ to an $i$-simplex with $i<n$, then $f(\sigma)$ does not contribute to the $n$-th homology group of $\mathbb{S}^n_{n+2}$. Therefore, we restrict our attention to \emph{non-degenerate simplicial maps}, that is, maps for which every $n$-simplex of $K$ is mapped onto an $n$-simplex of $\mathbb{S}^n_{n+2}$.
 For each simplex \(\sigma\) of \(\mathbb{S}^n_{n+2}\), let \(\alpha^{+}(\sigma)\) and \(\alpha^{-}(\sigma)\) denote the number of \(n\)-simplices of \(K\) mapped onto \(\sigma\) with the same sign and the opposite sign, respectively, with respect to the vertex ordering induced by the simplicial map. Then the degree of $f$ is 
   $\deg(f) = \alpha^{+}(\sigma) - \alpha^{-}(\sigma)$. We observe that the $\deg(f)$ is independent of the choice of the $n$-simplex $\sigma$ of $\mathbb{S}^n_{n+2}$. In this article, we are considering oriented triangulations of $n$-spheres.

\begin{remark}
 Let $K$ be an oriented triangulation of the $n$-sphere and let  $f: K \rightarrow \mathbb{S}^n_{n+2}$ be a simplicial map. Then the non-degeneracy of map $f$ implies that the one skeleton of $K$ is proper $(n+2)$-colorable.
\end{remark}

\begin{proposition}\label{Composition}
  Let $K, L$, and $M$ be triangulated closed orientable $n$-manifolds. If $f: K \rightarrow L$ and $g: L \rightarrow M$ are simplicial maps with degrees $d$ and $d'$, respectively, then the degree of the composite map $g \circ f$ is $d'\cdot d$.
\end{proposition}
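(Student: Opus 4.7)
The plan is to reduce the statement to the functoriality of the induced map on top-dimensional integer homology, which is the content of the definition of degree recalled at the start of Section 2.2. Since $K$, $L$, $M$ are closed orientable $n$-manifolds, the groups $H_n(K,\mathbb{Z})$, $H_n(L,\mathbb{Z})$, $H_n(M,\mathbb{Z})$ are each isomorphic to $\mathbb{Z}$. Pick generators $[a]$, $[b]$, $[c]$ of these groups (for instance the fundamental classes determined by a coherent orientation on each manifold, as described above). By the definition of degree, $f_*([a]) = d\cdot [b]$ and $g_*([b]) = d' \cdot [c]$.

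First I would note that any simplicial map is continuous on the geometric realizations, so $f$ and $g$ are bona fide continuous maps between topological spaces, and the induced maps on singular (equivalently simplicial) homology are defined. Next I would invoke the functoriality of singular homology on the category of topological spaces and continuous maps: for the composable pair $f\colon K \to L$ and $g\colon L \to M$, one has $(g\circ f)_* = g_* \circ f_*$ as homomorphisms $H_n(K,\mathbb{Z}) \to H_n(M,\mathbb{Z})$.

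Then the computation is immediate:
\[
(g\circ f)_*([a]) \;=\; g_*\bigl(f_*([a])\bigr) \;=\; g_*(d\cdot [b]) \;=\; d\cdot g_*([b]) \;=\; d\cdot d' \cdot [c],
\]
where the third equality uses that $g_*$ is a group homomorphism. By the definition of degree, this identifies $d'\cdot d$ as the degree of the composite $g\circ f$.

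There is essentially no obstacle: the only thing worth being careful about is that the simplicial composition $g \circ f$ is again a simplicial map (so that ``degree of a simplicial map'' is well-defined for it) and that the orientations of $K$, $L$, $M$ used in measuring the degrees of $f$, $g$, and $g\circ f$ are consistent, i.e.\ the same generators $[a],[b],[c]$ are used throughout. Both points are immediate from the setup of Section 2.2, so the proof is a short application of functoriality.
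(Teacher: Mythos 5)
Your proof is correct: the functoriality identity $(g\circ f)_* = g_*\circ f_*$ on top-dimensional homology, applied to fixed generators, immediately gives degree $d'\cdot d$. The paper in fact states this proposition without proof, treating it as the standard consequence of functoriality, so your argument is precisely the intended justification and nothing further is needed.
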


\begin{remark}\label{remark}
{\rm Let $\{v_1, v_2, \dots, v_{n+2}\}$ be the set of vertices of $\Sp^n_{n+2}$. If $I: \Sp^n_{n+2} \rightarrow \Sp^n_{n+2}$ is the identity map, then clearly the degree of the map $I$ is $1$. Now, define a map $g: \Sp^n_{n+2} \rightarrow \Sp^n_{n+2}$ such that $g(v_i) = v_i$ for $1 \leq i \leq n$, $g(v_{n+1}) = v_{n+2}$, and $g(v_{n+2}) = v_{n+1}$. We observe that the degree of the map $g$ is $-1$. Now, if $f: K \rightarrow \Sp^n_{n+2}$ is a simplicial map of degree $d \geq 0$ from a triangulated $n$-sphere to the standard sphere $\Sp^n_{n+2}$, then from Proposition \ref{Composition}, the degree of the map $g \circ f$ is $-d$. Therefore, to construct a triangulated $n$-sphere that induces a simplicial degree $d$ map to $\Sp^n_{n+2}$, it is sufficient to consider cases where $d \geq 2$. From now on, we are only interested in constructing triangulations for maps of degrees $\geq 2$.}
\end{remark}

\begin{definition}
 Fix $n,d\geq 1$.  We define $\Lambda(n,d)$ as the minimal number of vertices required for a triangulated $n$-sphere $K$ such that there is a non-degenerate simplicial degree map $f:K\to \Sp^n_{n+2}$ of degree $d$.
\end{definition}

\begin{definition}
    Fix $n,d\geq 1$.  We define $\mathcal{F}(n,d)$ as the minimal number of facets required for a triangulated $n$-sphere $K$ such that there is a non-degenerate simplicial degree map $f:K\to \Sp^n_{n+2}$ of degree $d$.
\end{definition}

\subsection{Covering type of Moore spaces}
The concept of covering type of a space was introduced by Karoubi and Weibel in 2016 (see \cite{Karoubi}). The covering type of a space $X$ measures the topological complexity of $X$; it is defined as the cardinality of a good open cover of $X$. Simplicial maps of degree $d$ over the $n$-sphere play an important role in computing the covering type of the Moore space $M(\mathbb{Z}_d,n)$. Below, we present the necessary preliminaries required for calculating the covering type of the Moore space $M(\mathbb{Z}_d,n)$.

\begin{definition}[Moore Space] For a given abelian group $G$ and $n\geq 1$, a Moore space $M(G,n)$  is a connected CW complex such that $\widetilde{H}_n(M(G,n);\mathbb{Z})\cong G$ and $\widetilde{H}_i(M(G,n);\mathbb{Z})=0$ for all $i\neq n$.
\end{definition}

  \begin{definition}[Mapping Cone]
\rm{Let $f \colon K \to M$ be a continuous map between topological spaces $K$ and $M$. Denote by $C(K)$ the cone on the topological space $K$. 
The mapping cone of $f$ is defined by
$C(f) := M \cup_{f} C(K)$. The map $f$ is called the \emph{attaching map}.}
\end{definition}

 \begin{remark} 
\rm{ The mapping cone of a degree $d$ map $f:\mathbb{S}^n \to \mathbb{S}^n$ is the Moore space $M(\mathbb{Z}_d,n)$. 
Indeed, consider the CW complex obtained by attaching an $(n+1)$-cell to $\mathbb{S}^n$ via the map $f$. 
The cellular chain groups are $C_k(X)=\mathbb{Z}$ for $k=0,n,n+1$ and $C_k(X)=0$ otherwise. 
Since there are no $(n-1)$-cells, the boundary map $\partial_n$ is trivial, while the boundary map 
$\partial_{n+1}:C_{n+1}\to C_n$ is multiplication by $d$, induced by the attaching map $f$. 
Thus, the cellular chain complex is
\[
0 \longrightarrow \mathbb{Z} \xrightarrow{\times d} \mathbb{Z} \longrightarrow 0 .
\]
Hence $\widetilde H_n(X;\mathbb{Z})\cong \mathbb{Z}_d$ and all other reduced homology groups vanish. 
Therefore, the mapping cone of $f$ is the Moore space $M(\mathbb{Z}_d,n)$. }
\end{remark}

\begin{definition}
   \rm{Let \(X\) be a topological space.
The \emph{covering type} of \(X\), denoted by \(\mathrm{ct}(X)\), is the minimum cardinality of a good open cover of \(Y\).
Here, a \emph{good open cover} is a collection of open sets such that each set is contractible and every non-empty finite intersection of these sets is also contractible.
}
\end{definition}


We use the following lemma to compute the covering type of the Moore space $M(\mathbb{Z}_d,n)$.
\begin{lemma}\cite{Borghini}
    Let $X$ be a topological space homotopy equivalent to a finite CW-complex. Then, $\mathrm{ct}(X)$ coincides with the minimum possible number of vertices of a simplicial complex $K$ homotopy equivalent to $X$.
\end{lemma}

\section{Construction of degree maps over $n$-spheres}\label{section2}
In this section, we present constructions of triangulated $n$-spheres that induce a degree $d$ map for every $d \geq 2$. We begin by proving a lemma \ref{connectedlemma}. In this lemma, we analyze how the degree $d$ of the resulting construction is determined after performing the connected sum of the triangulations $K_1$ and $K_2$ of $n$-spheres, which induce maps of degrees $d_1$ and $d_2$, respectively.  

\begin{lemma}{\label{connectedlemma}}
Suppose we have two simplicial maps
$f_1 : K_1 \rightarrow \mathbb{S}^{n}_{n+2}
\quad\text{and}\quad
f_2 : K_2 \rightarrow \mathbb{S}^{n}_{n+2},$
with degrees \(d_1\) and \(d_2\), respectively for $d_1,d_2\geq 0$ and $n\geq 1$. Assume that for any one \(n\)-simplex \(\sigma\)  of among $n+2$ \(n\)-simplices \(\sigma\) in \(\mathbb{S}^{n}_{n+2}\), the preimage of \(\sigma\)
under \(f_1\) contains more than \(d_1\) \(n\)-simplices, or the preimage of
\(\sigma\) under \(f_2\) contains more than \(d_2\) \(n\)-simplices. Under these
conditions, we can define a new simplicial map
$f : K_1 \# K_2 \rightarrow \mathbb{S}^{n}_{n+2}$
whose degree is \(d_1 + d_2\).
\end{lemma}
\begin{proof}
As stated in the lemma, we have two simplicial maps $f_1$ and $f_2$, and we
assume that both maps use the same orientation on $\mathbb{S}^n_{n+2}$; in
particular, the signs of $n$-simplices in $\mathbb{S}^n_{n+2}$
agree for both $f_1$ and $f_2$. Let $\sigma \in \mathbb{S}^n_{n+2}$ be the
$n$-simplex considered in the lemma. Without loss of generality, assume that
$f_1^{-1}(\sigma)$ contains more than $d_1$ $n$-simplices.

Now, choose an $n$-simplex
$\sigma_1 \in f_1^{-1}(\sigma)$ in $K_1$ whose sign is opposite to that of
$\sigma$, and an $n$-simplex $\sigma_2 \in f_2^{-1}(\sigma)$ in $K_2$ whose
sign agrees with that of $\sigma$ in $\Sp^n_{n+2}$. We remove the interiors of $\sigma_1$ and
$\sigma_2$, and then identify their boundaries. This produces a new
triangulation of the $n$-sphere, denoted by $K = K_1 \# K_2$.

We now define a simplicial map
$f : K \longrightarrow \mathbb{S}^n_{n+2}$
by setting $f(v) = f_1(v)$ for vertices $v \in K_1$, and
$f(v) = f_2(v)$ for vertices $v \in K_2$. This gives a well-defined simplicial
map.

To compute the degree of $f$, consider the simplex $\sigma$ in $\Sp^n_{n+2}$ and assume that
$\sigma$ has a positive sign. Since the positive simplices mapping to $\sigma$
in $K_1$ and $K_2$ remain positive in $K$, except for $\sigma_2$ from $K_2$, and the
negative simplices mapping to $\sigma$ in $K_1$ and $K_2$ remain negative in $K$,
except for $\sigma_1$ from $K_2$, it follows that $\deg(f)=d_1+d_2$.
\end{proof}

The next lemma gives a converse analogue of the previous lemma.

\begin{lemma}
Let $n,d\geq 1$. Suppose we have a simplicial map $f: K \to \mathbb{S}^n_{n+2}$ of degree $d$, where $K$ is a triangulation of an $n$-sphere that contains a missing facet. Then $K$ can be expressed as a connected sum of two triangulated $n$-spheres $K_1$ and $K_2$. Moreover, $K_1$ and $K_2$ induce simplicial maps with degrees $d_1$ and $d_2$, respectively, with $d = d_1 + d_2$, where $d_1, d_2 \geq 0$.
\end{lemma}
 \begin{proof}
 We have a simplicial map $f:K\to \mathbb{S}^{n}_{n+2}$, where $K$ is a triangulation of an $n$-sphere containing a missing facet. Then there exist triangulations $K_1$ and $K_2$ of $n$-spheres such that the missing facet appears as an $n$-simplex in each of them and $K = K_1 \# K_2$. Define the simplicial maps $f_1:K_1\to \mathbb{S}^{n}_{n+2}$ by $f_1(v)=f(v)$ for all $v\in K_1$, and $f_2:K_2\to \mathbb{S}^{n}_{n+2}$ by $f_2(u)=f(u)$ for all $u\in K_2$.  Without loss of generality, assume that the vertices of the missing facet are mapped to $v_1, v_2, \dots, v_{n+1}$, which are vertices of $\mathbb{S}^{n}_{n+2} = \partial([v_1 v_2 \dots v_{n+2}])$. Consider the $n$-simplex $\sigma = [v_2 v_3 \dots v_{n+2}]$ in $\mathbb{S}^{n}_{n+2}$. 

We know that $\alpha^{+}(\sigma)-\alpha^{-}(\sigma) = d$ with respect to $f$. Let $\alpha^{+}(\sigma)-\alpha^{-}(\sigma)=d_1$ with respect to $f_1$ and $\alpha^{+}(\sigma)-\alpha^{-}(\sigma)=d_2$ with respect to $f_2$. Then $\deg(f_1)=d_1$ and $\deg(f_2)=d_2$. Therefore, $\deg(f)=d_1+d_2$, and consequently $d=d_1+d_2$.
 \end{proof}

Using Lemma \ref{connectedlemma}, we provide an alternative construction on
the triangulated \(2\)-sphere that is vertex minimal as well as facet minimal and is not combinatorially equivalent to the construction of Madahar and Sarkaria in \cite{Madahar2001}.

\begin{proposition}\label{2sphere}
 For all integers $d\ge 1$, $\Lambda(2,1)=4$, $\Lambda(2,2)=7$, and $\Lambda(2,d)=2d+2$ for $d\ge 3$.
\end{proposition}

\begin{proof}
The identity map $h_1:\Sp^2_{4}\to \Sp^2_{4}$ is the simplicial map of degree $1$.
 Consider the triangulated $2$-sphere $L_2$ on $7$ vertices with facet list $\{[u_{\{1,1\}}u_{\{2,1\}}u_{\{3,1\}}], [u_{\{1,2\}}u_{\{2,2\}}u_{\{3,1\}}], $ $[u_{\{1,1\}}u_{\{2,1\}}u_{\{4,1\}}], [u_{\{1,2\}}u_{\{2,1\}}u_{\{4,1\}}], [u_{\{1,2\}}u_{\{2,2\}}u_{\{4,1\}}], [u_{\{1,2\}}u_{\{2,1\}}u_{\{4,2\}}], [u_{\{1,1\}}u_{\{3,1\}}\\ u_{\{4,1\}}], [u_{\{1,2\}}u_{\{3,1\}}u_{\{4,2\}}], [u_{\{2,2\}}u_{\{3,1\}}u_{\{4,1\}}], [u_{\{2,1\}}u_{\{3,1\}}u_{\{4,2\}}]\}$.  $L_2$ can be constructed by applying three times facet subdivision over the standard $2$-sphere. 
 We define a map $h_2:L_2\to \Sp^2_{4}$ by sending $u_{\{1,1\}},u_{\{1,2\}}$ to $v_1$, $u_{\{2,1\}},u_{\{2,2\}}$ to $v_2$, $u_{\{3,1\}}$ to $v_3$, and $\{u_{\{4,1\}},u_{\{4,2\}}\}$ to $v_4$. We verify that $\deg(h_2)=2$; moreover, the preimage of $[v_1v_2v_4]$ consists of four $2$-simplices, among which $[u_{\{1,2\}}u_{\{2,1\}}u_{\{4,1\}}]$ has the opposite sign as of $[v_1v_2v_4]$ in $\Sp^2_{4}$.

Construct a triangulated $2$-sphere $L_3 = L_2 \# \Sp^2_{4}$ by identifying the face $h_1^{-1}([v_1v_2v_4])$ with a face in $h_2^{-1}([v_1v_2v_4])$ that has the opposite sign in $L_2$ to that of $[v_1v_2v_4]$ in $\Sp^2_{4}$.  From Lemma~\ref{connectedlemma}, there is a simplicial map $h_3:L_3\to \Sp^2_{4}$ of degree $3$. We find that $f_0(L_3)=8$.

Now, inductively construct $2$-spheres $L_{2k+1} = L_{2k-1} \# L_2$ for $k \geq 2$ by identifying 
a face from $h_{2k-1}^{-1}([v_1v_2v_4])$ that has the same sign in $L_{2k-1}$ as that of $[v_1v_2v_4]$ in $\Sp^2_{4}$ with a face from $h_{2}^{-1}([v_1v_2v_4])$ that has the opposite sign in $L_{2}$ as that of $[v_1v_2v_4]$ in $\Sp^2_{4}$. 
From Lemma~\ref{connectedlemma}, there exists a simplicial map $h_{2k+1} : L_{2k+1} \to \Sp^2_{4}$ of degree $2k+1$. 
We observe that $f_0(L_{2k+1}) = 4k + 4$.

Now, we give the construction for the even degrees case. First, we construct a triangulated $2$-sphere $L$ obtained as the suspension of a $4$-cycle with vertex set $\{a_1,b_1,a_2,b_2\}$ and suspension vertices $\{d_1,d_2\}$, and then subdividing the faces $[a_1b_1d_1]$ and $[a_2b_2d_1]$ by the vertices $c_1$ and $c_2$, respectively. 
Thus, $L$ is a $2$-sphere on $8$ vertices. 

Define a map $g:L\to \Sp^2_{4}$ which sends $a_i \mapsto v_1$, $b_i \mapsto v_2$, $c_i \mapsto v_3$, and $d_i \mapsto v_4$ for $1\le i\le 2$. 
We observe that $g$ is a simplicial map and $g^{-1}([v_1v_2v_4])$ contains six $2$-simplices; two with the same sign and four with the opposite sign to that of $[v_1v_2v_4]$ in $\Sp^2_{4}$. Thus, $g$ is a map of degree $2$. Now, we construct a triangulated $2$-sphere $L\#\Sp^2_{4}$ by identifying an oppositely signed face in $g^{-1}([v_1v_2v_4])$ with the face $h_1^{-1}([v_1v_2v_4])$. This $L\#\Sp^2_{4}$ yields a simplicial map of degree $3$. Since $g^{-1}([v_1v_2v_4])$ contains two oppositely signed faces, one oppositely signed face remains in $L\# \Sp^2_{4}$. It can be identified with the face $h_1^{-1}([v_1v_2v_4])$, giving a new triangulated $2$-sphere, say $L_4$, on $10$ vertices, which induces a simplicial map $h_4:L_4\to \Sp^2_{4}$ of degree $4$. 

Now, using the map $h_2$, for $k\geq 3$, we inductively construct triangulated $2$-spheres $L_{2k}=L_{2k-2}\#L_2$ by identifying a face from $h_{2k-2}^{-1}([v_1v_2v_4])$ that has the same sign in $L_{2k-2}$ as that of $[v_1v_2v_4]$ in $\Sp^2_{4}$ with a face from $h_2^{-1}([v_1v_2v_4])$ that has the opposite sign in $L_2$ as that of $[v_1v_2v_4]$ in $\Sp^2_{4}$. 
From Lemma~\ref{connectedlemma}, there exists a simplicial map $h_{2k} : L_{2k} \to \Sp^2_{4}$ of degree $2k$. 
We observe that $f_0(L_{2k}) = 4k + 2$.

The vertex minimality and facet minimality of $L_k$ for $k \geq 3$ follow from the facts that $f_0(L_k)=2k+2$, and that any triangulated $2$-sphere $K$ inducing a simplicial map of degree $k$, say $f:K \to \Sp^2_{4}$, must contain at least $4k$ $2$-simplices and hence at least $2k+2$ vertices. 
\end{proof}

\begin{remark}
   \rm{ The constructions for degree maps up to four in Proposition \ref{2sphere} are the same as the degree maps constructed in \cite{Madahar2001}. However, the constructions of the other degree maps over the $2$-sphere differ from those of Madahar and Sarkaria~\cite{Madahar2001}.

The odd-degree maps constructed in \cite{Madahar2001} are uniquely $4$-colorable, in which one color is assigned to exactly two vertices, while each of the remaining three colors is assigned to more than two vertices. In Proposition \ref{2sphere}, for odd-degree simplicial maps $d = 2k+1$, $k \geq 2$, $k+1$ vertices receive the same color.

In the constructions given in \cite{Madahar2001} for even-degree maps $d=2k$, examining the vertex degrees shows that only two vertices have degree $3k$, while the remaining vertices have degree either $3$ or $6$. When we constructed $L_6$ in Proposition \ref{2sphere}, which induces a simplicial map of degree $6$ over the $2$-sphere, $L_6$ had three vertices of degree $9$. Therefore, the $2$-spheres corresponding to the even-degree maps $d>6$ in Proposition \ref{2sphere} contain at least one vertex whose degree is neither $3k$, $3$, nor $6$. Hence, for degree $d > 4$, our constructions differ from those of Madahar and Sarkaria.}    
\end{remark}

We now extend the constructions to higher dimensions. The following theorem provides a general method to construct simplicial maps from a triangulated $n$-sphere to the standard $n$-sphere with a prescribed degree. In addition, the theorem gives explicit formulas for the number of vertices and $n$-simplices of the domain complex corresponding to the degree of the map.

\begin{theorem}\label{thm:mainconstruction}
Let $l,p,n$ be integers with $l \ge 0$ and $2 \le p \le n$. Then there exists a simplicial map
$g_d \colon K_d \to \mathbb{S}^n_{n+2}$ of degree $d\geq 1$ such that
\begin{enumerate}[(i)]
    \item $f_0(K_d) = (l+1)(n+2)$, $f_n(K_d)=d(n+2)$ if $d = ln + 1$;
    \item $f_0(K_d) = (l+1)(n + 2)+p+1$, $f_n(K_d)=d(n+2)+2(n-(p-1))$ if $d = ln + p$.
\end{enumerate}
\end{theorem}
\begin{proof}
Let $\Sp^n_{n+2}$ be the $n$-sphere for $n \geq 3$ with vertex set 
$V(\Sp^n_{n+2}) = \{v_1, v_2, \ldots, v_{n+2}\}$.

When $l=0$ and $d=1$, we consider the simplicial map $g_1 : K_1 \to \Sp^n_{n+2}$ to be the identity map, where 
$K_1 := \partial(w_1 w_2 \cdots w_{n+2})$. Clearly, $f_0(K_1)=n+2$ and 
$f_n(K_1)=n+2$. 

To construct a simplicial map of degree $2$, we first construct a degree $1$ simplicial map $g : K \to \Sp^n_{n+2}$ with $f_0(K)=n+4$, and then perform a connected sum operation between $K$ and $K_1$. To construct $K$, we begin with the boundary of the $(n+1)$-simplex 
$K=\partial [u_1u_2\cdots u_{n+2}]$. Perform a facet subdivision of the $n$-simplex $[u_2u_3\cdots u_{n+2}]$ by inserting a new vertex $u_1'$. This replaces the simplex with $n+1$ new $n$-simplices. Next, subdivide the $n$-simplex 
$[u_1'u_3u_4\cdots u_{n+2}]$ by inserting another new vertex $u_2'$, which again produces $n+1$ new $n$-simplices. 

Thus, the resulting complex $K$ is a triangulation of the $n$-sphere with $n+4$ vertices and $3n+2$ $n$-simplices. Define a simplicial map $g : K \to \Sp^n_{n+2}$ by sending $u_i \mapsto v_i$ for $1 \le i \le n+2$, and $u_1' \mapsto v_1$, $u_2' \mapsto v_2$. Then $g$ is a simplicial map. Under this map, every $n$-simplex of $\Sp^n_{n+2}$ has exactly three facets in its preimage in $K$, except for the two simplices $[v_1v_3v_4\cdots v_{n+2}]$ and $[v_2v_3\cdots v_{n+2}]$, each of which has exactly one facet in its preimage. Hence, $\deg(g)=1$.

Consider the connected sum $K_2 := K_1 \# K$, where we identify a face from 
$g_1^{-1}([v_1v_2\cdots$
$ v_{n+1}])$ having the same sign as $[v_1v_2\cdots v_{n+1}]$ in $\Sp^n_{n+2}$ and a face from 
$g^{-1}([v_1v_2\cdots v_{n+1}])$ having the opposite sign. Thus, by Lemma~\ref{connectedlemma}, there exists a simplicial map $g_2:K_2\to \Sp^n_{n+2}$ of degree $2$. We obtain $f_0(K_2)=n+5$ and $f_n(K_2)=4n+2$.

Now, under $g_2$, the preimage of exactly $n-1$, $n$-simplices of $\Sp^n_{n+2}$ consists of facets that are oppositely signed in $K_2$. Thus, for $3 \le d \le n+1$, a simplicial map of degree $d$ is obtained inductively by defining 
$K_d := K_{d-1} \# K_1$. At each inductive step, there exists a face $\sigma$ such that $g_{d-1}^{-1}(\sigma)$ contains a facet whose sign is opposite to that of $\sigma$ in $\Sp^n_{n+2}$. We identify such a facet from $g_{d-1}^{-1}(\sigma)$ with the face $g_1^{-1}(\sigma)$. By Lemma~\ref{connectedlemma}, this yields a simplicial map $g_d:K_d\to \Sp^n_{n+2}$ of degree $d$. 

Each connected sum increases the number of vertices by $1$ and the number of $n$-simplices by $n$. Consequently, $f_0(K_d)=n+2+(d+1)$ and 
$f_n(K_d)=d(n+2)+2n-2(d-1)$ for $3\leq d\le n+1$.

Next, we observe that there is no facet $\tau$ in $\Sp^n_{n+2}$ such that $g_{n+1}^{-1}(\tau)$ contains a facet in $K_{n+1}$ whose sign is opposite to the sign of $\tau$ in $\Sp^n_{n+2}$. Therefore, we again consider the $n$-sphere $K$, for which there are $n$ facets whose signs change under the map $g$.

Thus, we obtain a simplicial map 
$g_{n+2}:K_{n+2}\to \Sp^n_{n+2}$ of degree $n+2$, where $K_{n+2}$ is obtained by taking the connected sum of $K_{n+1}$ and $K$ by identifying facet $\sigma_1$ from $K_{n+1}$ and $\sigma_2$ from $K$ such that $\sigma_1$ and $\sigma_2$ are oppositely signed and both map to the same facet $\sigma$ in $\Sp^n_{n+2}$. 

Now observe that there are exactly $n-1$ facets in $K_{n+2}$ whose sign changes under the map $g_{n+2}$. Proceeding in the same way as above, we inductively construct $K_d = K_{d-1} \# K_1$ for $n+3 \le d \le 2n+1$ such that there exists a simplicial map $g_d : K_d \to \Sp^n_{n+2}$. In each inductive step, we use one of the $n-1$ facets of $K_{n+2}$ whose sign changes under the map $g_{n+2}$.

We construct simplicial maps 
$g_d:K_d\to \Sp^n_{n+2}$ for $d=ln+2$ with $l\ge3$ using the maps 
$g_{d-1}:K_{d-1}\to \Sp^n_{n+2}$ and $g:K\to \Sp^n_{n+2}$, where $K_d$ is obtained by taking the connected sum of $K_{d-1}$ and $K$ by identifying facets $\sigma_1$ from $K_{d-1}$ and $\sigma_2$ from $K$ such that $\sigma_1$ and $\sigma_2$ are oppositely signed and both map to the same facet $\sigma$ in $\Sp^n_{n+2}$. 

Similarly, we inductively construct simplicial maps 
$g_d:K_d\to \Sp^n_{n+2}$ for $d=ln+p$, where $l\ge3$ and $3\le p\le n+1$, using the maps 
$g_{d-1}:K_{d-1}\to \Sp^n_{n+2}$ and $g_1:K_1\to \Sp^n_{n+2}$. Here $K_d$ is obtained by taking the connected sum of $K_{d-1}$ and $K_1$ by identifying facets $\gamma_1$ from $K_{d-1}$ and $\gamma_2$ from $K_1$ such that $\gamma_1$ and $\gamma_2$ are oppositely signed and both map to the same facet $\sigma$ in $\Sp^n_{n+2}$. 

Since in each step we use either $K_1$ or $K$ for the connected sum, the number of vertices increases by either $1$ or $3$, and the number of facets increases by either $n$ or $3n$. Hence, the result follows.
 \end{proof}

\begin{corollary}\label{them:upperbound}
Let $l,p,n$ be integers with $l \ge 0$ and $2 \le p \le n$, and $d\geq 1$. Then
\begin{enumerate}[(a)]
    \item for $d = ln + 1$, $\Lambda(n,d) \le n + d + 2l + 1$;
    \item for $d = ln + p$,  $\Lambda(n,d) \le n + d +2l+ 3$.
\end{enumerate}
\end{corollary}
\begin{proof}
   The proof follows from Theorem \ref{thm:mainconstruction}. 
\end{proof}

\subsection{Asymptotics of  $\Lambda(n,d)$}\label{subsection 1}

In this section, we derive some results on $\Lambda(n,d)$.

\begin{proposition}
    Fix $n\geq 3$ and $d\geq 1$. Then $\Lambda(n,d)-1 \leq \Lambda(n,d+1)\leq \Lambda(n,d)+3$.
    \end{proposition}

    \begin{proof}
Let $K_{d+1}$ be a triangulated $n$-sphere with $f_0(K_d)=\Lambda(n,d+1)$ which induces a simplicial map of degree $d+1$, say $g_{d+1}:K_{d+1}\to \mathbb{S}^{n}_{n+2}$. Let $K$ be a triangulated $n$-sphere with $n+2$ vertices which induces a simplicial map of degree $-1$, say $g:K\to \mathbb{S}^{n}_{n+2}$ (see Remark~\ref{remark}).

We construct another triangulated $n$-sphere $L=K_{d+1}\#K$ by identifying a facet $\sigma_1$ from $K_{d+1}$ and a facet $\sigma_2$ from $K$ such that the two facets have opposite signs but map to the same facet in $\Sp^n_{n+2}$ under the corresponding maps. Such a pair of facets exists because $g:K\to \Sp^n_{n+2}$ is a map of degree $-1$. Thus, by Lemma~\ref{connectedlemma}, there exists a simplicial map, say 
$f:L\to \Sp^n_{n+2}$, of degree $d$. This implies that 
$\Lambda(n,d+1)\geq \Lambda(n,d)-1$.

To prove the inequality $\Lambda(n,d+1)\leq \Lambda(n,d)+3$, we proceed as follows. 
Consider the triangulated $n$-sphere $K$ with $n+4$ vertices constructed in the proof of Theorem~\ref{thm:mainconstruction}, which induces a simplicial map of degree $1$, say $h:K\to \mathbb{S}^{n}_{n+2}$. 
Let $f':K_d\to \mathbb{S}^{n}_{n+2}$ be a simplicial map of degree $d$ and $f_0(K_d)=\Lambda(n,d)$. 

Since $K$ contains a facet whose sign changes under the map $h$, we perform the connected sum operation between $K$ and $K_d$ using oppositely signed facets from $K$ and $K_d$ that map to the same facet in $\Sp^n_{n+2}$. This yields another triangulation, say $L'$. By Lemma~\ref{connectedlemma}, there exists a simplicial map 
$f'':L'\to \mathbb{S}^{n}_{n+2}$ of degree $d+1$. Since $L'$ has three more vertices than $K_d$, we obtain 
$\Lambda(n,d+1)\leq \Lambda(n,d)+3$.
\end{proof}

\begin{theorem}
    Fix $d\geq 1$. Then $\frac{\Lambda(n,d)}{n}\to 1$ if $n\to \infty$.
\end{theorem}

\begin{proof}
We know that $n+2\leq \Lambda(n,d)$, and from Corollary \ref{them:upperbound}, we know that $\Lambda(n,d)\leq n+d+\frac{2d}{n}+1\leq n+3d+1$. We conclude that $1+\frac{2}{n}\leq \frac{\Lambda(n,d)}{n} \leq 1+\frac{3d+1}{n}$. Hence, from the sandwich theorem when $n\to\infty$ $\frac{\Lambda(n,d)}{n}\to 1$.    \
\end{proof}

\begin{lemma}\label{degree 2}
Let $K$ be a triangulated $n$-sphere. If the degree of a simplicial map $f\colon K\to \Sp^n_{n+2}$ is $2$, then $K$ contains at least $n+5$ vertices.
\end{lemma}

\begin{proof}
Let $\{v_1, v_2, \ldots, v_{n+2}\}$ be the vertex set of $\Sp^n_{n+2}$. Consider a simplicial map $f \colon K \to \Sp^n_{n+2}$ of degree $2$. 
Consider the $n$-simplex $[v_1v_2\cdots v_{n+1}]$, which is positive in $\Sp^n_{n+2}$. Then its preimage under $f$ contains at least two positive $n$-simplices, and these simplices cannot share any $(n-1)$-dimensional face.

Let $[u_1u_2u_3\cdots u_{n+1}]$ be one such positive simplex in $K$. If the two positive simplices do not share any face or share an $i$-dimensional face with $0 \le i \le n-3$, then $K$ must contain at least three additional vertices other than $u_1,u_2,u_3,\dots,u_{n+2}$. Thus, $K$ has at least $n+5$ vertices.

If they share an $(n-2)$-simplex, assume without loss of generality that $[u'_1u'_2u_3\cdots u_{n+1}]$ is the other positive $n$-simplex. Now consider the simplex $[v_2v_3\cdots v_{n+2}]$ in $\Sp^n_{n+2}$, which is positive (resp.\ negative) when $n$ is odd (resp.\ even). The preimage of this simplex must contain at least two positive (resp.\ negative) $n$-simplices, and these simplices cannot share an $(n-1)$-dimensional face. This forces the existence of another vertex besides the previous $n+4$ vertices. Therefore $f_0(K)\ge n+5$.
\end{proof}

\begin{lemma}\label{degree3}
Let $K$ be a triangulated $n$-sphere. If the degree of a simplicial map $f\colon K\to \Sp^n_{n+2}$ is $3$, then $K$ contains at least $n+6$ vertices.
\end{lemma}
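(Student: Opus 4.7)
The plan is to derive $f_0(K)\geq n+6$ via a vertex-counting/averaging argument over the $n+2$ facets of $\Sp^n_{n+2}$. For each $i\in\{1,\ldots,n+2\}$, let $\tau_i:=[v_1v_2\cdots\hat{v_i}\cdots v_{n+2}]$ and let $N_j$ denote the number of vertices of $K$ mapped by $f$ to $v_j$; surjectivity forces $N_j\geq 1$. Since the degree of $f$ is $3$, $|\text{alg}(\tau_i)|=3$, so $f^{-1}(\tau_i)$ contains at least three $n$-simplices of $K$ all of the same sign (all positive if $\tau_i$ is positive, all negative otherwise).

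The first step is to show that these three same-sign preimages are pairwise $(n-1)$-face disjoint. If two of them, $\sigma_1,\sigma_2\in f^{-1}(\tau_i)$, shared an $(n-1)$-face $\rho$, then the unshared vertex of each must map to the unique vertex of $\tau_i$ not lying in $f(\rho)$; coherent orientation of $K$ then forces $\sigma_1,\sigma_2$ to induce opposite orientations on $\rho$, so under the induced orderings of Remark \ref{orientation} they would carry opposite $f$-induced signs, contradicting that they have the same sign.

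Next, label each of the three simplices as a tuple $(t_j)_{j\neq i}$ with $f(t_j)=v_j$; these three tuples in $\prod_{j\neq i}f^{-1}(v_j)$ pairwise differ in at least two coordinates. Let $a_j$ be the number of distinct values taken in coordinate $j$, and write $p:=\#\{j:a_j=2\}$ and $q:=\#\{j:a_j=3\}$. The sum of the three pairwise Hamming distances equals $2p+3q$, so the distance-at-least-$2$ condition forces $2p+3q\geq 6$, while $\sum_{j\neq i}a_j=(n+1)+p+2q$. Minimizing $p+2q$ subject to $2p+3q\geq 6$ and $p+q\leq n+1$ gives (for $n\geq 2$, so that $p=3, q=0$ is feasible) $p+2q\geq 3$, hence $\sum_{j\neq i}a_j\geq n+4$. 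Since $a_j\leq N_j$, we obtain $\sum_{j\neq i}N_j\geq n+4$ for every $i$.

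Summing these $n+2$ inequalities, each $N_j$ is counted $n+1$ times, so $(n+1)f_0(K)\geq (n+2)(n+4)$, giving $f_0(K)\geq n+5+\tfrac{3}{n+1}>n+5$ and therefore $f_0(K)\geq n+6$ as an integer. The case $n=1$ can be handled separately: then $p=3$ is infeasible, but the feasible optimum $p=0, q=2$ forces $\sum_{j\neq i}a_j\geq 6$, yielding an even stronger bound. I expect the main obstacle to lie in the orientation step: carefully unwinding Remark \ref{orientation}'s sign convention to rule out $(n-1)$-adjacency of same-sign preimages. Once that is in place, the combinatorial minimization and the averaging are routine.
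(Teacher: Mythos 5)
Your argument is correct, but it reaches the bound by a genuinely different route than the paper. Both proofs start from the same two facts: $\operatorname{alg}(\tau)=3$ forces at least three preimage facets of the same sign over each facet $\tau$ of $\Sp^n_{n+2}$, and two same-sign preimages of the same facet cannot share an $(n-1)$-face (your orientation step, which you flag as the main risk, is exactly the standard coherence argument the paper already relies on in Lemma \ref{degree 2} and here, so there is no hidden obstacle there). After that the paths diverge: the paper works locally over the single facet $[v_1v_2\cdots v_{n+1}]$, running a case analysis on the dimension of the faces shared by the three positive preimages $\sigma_1,\sigma_2,\sigma_3$ to exhibit four vertices beyond $u_1,\ldots,u_{n+2}$ (the vertex $u_{n+2}$ coming from surjectivity onto $v_{n+2}$); your proof instead extracts only the weaker local estimate $\sum_{j\neq i}N_j\geq n+4$ for \emph{every} facet $\tau_i$, via the Hamming-distance count $2p+3q\geq 6$ and the minimization $p+2q\geq 3$, and then recovers $n+6$ globally by summing over all $n+2$ facets and using the fractional slack $3/(n+1)$ to round up. What your approach buys: it sidesteps the delicate case analysis (notably the subcase where all three pairs share $(n-2)$-faces, which the paper treats rather tersely), it never needs the extra vertex over $v_{n+2}$, and it mechanizes in a way that adapts to other degrees. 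What the paper's approach buys: it is shorter, purely local, and identifies explicit new vertices rather than an averaged count. One small remark: the side constraint $p+q\leq n+1$ can only raise the minimum of $p+2q$, so your separate discussion of feasibility at $n=1$ (and the caveat ``for $n\geq 2$'') is unnecessary—the bound $p+2q\geq 3$ holds outright, and the rest of the computation is unconditional.
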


\begin{proof}
Let $\{v_1, v_2, \ldots, v_{n+2}\}$ be the vertex set of $\Sp^n_{n+2}$. Consider a simplicial map $f \colon K \to \Sp^n_{n+2}$ of degree $3$. 
Consider the $n$-simplex $[v_1v_2\cdots v_{n+1}]$ as a positive simplex in $\Sp^n_{n+2}$. The preimage of this simplex contains at least three positive $n$-simplices, say $\sigma_1,\sigma_2,\sigma_3$, and no two of them share an $(n-1)$-dimensional face.

Let $\sigma_1=[u_1u_2u_3\cdots u_{n+1}]$ be one such simplex in $K$. If $\sigma_1$ and $\sigma_2$ do not share any face or share an $i$-dimensional face with $0\le i\le n-4$, then $K$ must contain at least four additional vertices other than $u_1,u_2,u_3,\dots,u_{n+2}$.

If $\sigma_1$ and $\sigma_2$ share an $(n-3)$-dimensional face, then the fact that $\sigma_1$ and $\sigma_3$ do not share an $(n-1)$-dimensional face again forces the existence of at least four additional vertices other than $u_1,u_2,u_3,\dots,u_{n+2}$.

If every pair among $\sigma_1,\sigma_2,\sigma_3$ shares an $(n-2)$-dimensional face, then there are already three additional vertices besides $u_1,u_2,u_3,\dots,u_{n+1}$. Now consider the simplex $[v_2v_3\cdots v_{n+2}]$. Its preimage contains three simplices, say $\tau_1,\tau_2,\tau_3$. From this we conclude that $K$ must contain at least $n+6$ vertices.
\end{proof}

\begin{theorem}
Fix $n\ge 3$. Then $\Lambda(n,2)=n+5$ and $\Lambda(n,3)=n+6$.
\end{theorem}

\begin{proof}
From Corollary~\ref{them:upperbound}, we have $\Lambda(n,2)\le n+5$ and $\Lambda(n,3)\le n+6$. 
On the other hand, Lemmas~\ref{degree 2} and~\ref{degree3} give $\Lambda(n,2)\ge n+5$ and $\Lambda(n,3)\ge n+6$. Hence the result follows.
\end{proof}

The following result holds even without imposing the non-degeneracy condition on the simplicial degree maps, and hence answers one of the questions posed in Section~5 of \cite{Ryabichev}. 

\begin{theorem}\label{openquestion}
 Fix $n\geq 3$. Then $\limsup\limits_{d\to\infty}\frac{\Lambda(n,d)^{\lfloor \frac{n+1}{2}\rfloor}}{d}\neq 0$.
\end{theorem}

\begin{proof}
We prove the result using Proposition~\ref{pr:cyclicpolytope}.  
Let $M$ be an $(n+1)$-dimensional cyclic polytope with $k$ vertices.  
Its boundary $\partial M$ is an $n$-dimensional polytopal sphere.  
The number of $n$-dimensional faces of $\partial M$ is given by
\begin{equation}
f_n(M)=
\begin{cases}
\displaystyle
\binom{k-\left\lfloor \frac{n+1}{2} \right\rfloor}{\left\lfloor \frac{n+1}{2} \right\rfloor}
+\binom{k-\left\lfloor \frac{n+1}{2} \right\rfloor-1}{\left\lfloor \frac{n+1}{2} \right\rfloor-1},
& \text{if $n$ is even}, \\[10pt]
\displaystyle
2\binom{k-\left\lfloor \frac{n+1}{2} \right\rfloor-1}{\left\lfloor \frac{n+1}{2} \right\rfloor},
& \text{if $n$ is odd}.
\end{cases}
\end{equation}

Let $K_d$ be a triangulation of the $n$-sphere with the minimum number of vertices
$\Lambda(n,d)$ that admits a simplicial map of degree $d$ from $K_d$ to $\mathbb{S}^n_{n+2}$.
Then $f_n(K_d)=d(n+2)+A$, where $A$ is free from $d$.

Assume that $n$ is even and set $m=\left\lfloor \frac{n+1}{2} \right\rfloor$, then $f_n(M)=\binom{k-m}{m}+\binom{k-m-1}{m-1}$.
Since $
\binom{k-m}{m}
=\frac{(k-m)(k-m-1)\cdots(k-2m+1)}{m!}
\le \frac{k^{m}}{m!}$,
and $\binom{k-m-1}{m-1}
=\frac{(k-m-1)\cdots(k-2m+1)}{(m-1)!}
\le \frac{k^{m-1}}{(m-1)!}$,
it follows that $f_n(M)\le \frac{1}{m!}\,k^{m}+\frac{1}{(m-1)!}\,k^{m-1}$, when $n$ is even.

When $n$ is odd and let $m=\left\lfloor \frac{n+1}{2} \right\rfloor$, then $f_n(M)= 2\binom{k-m-1}{m}$. Since $\binom{k-m-1}{m}
=\frac{(k-m-1)(k-m-2)\cdots(k-2m)}{m!}
\le \frac{k^{m}}{m!}$. This implies
$f_n(M)\le \frac{1}{m!}\,k^{m}$. 

Consequently, there exists a constant $C>0$, depending only on $n$, such that $f_n(M)\le C\,k^{m}$. Therefore, $f_n(K_d)\le C\,k^{m}$. Recalling that $f_n(K_d)=d(n+2)+A$, we obtain $d(n+2)+A \le C\left(\Lambda(n,d)\right)^{m}$.
Equivalently, this shows that $\Lambda(n,d)^m \ge \frac{(n+2)}{C
}\,d + \frac{A}{C}$ for all sufficiently large d.
In particular, this implies that
\[
\limsup_{d\to\infty}
\frac{(\Lambda(n,d))^{\left\lfloor \frac{n+1}{2} \right\rfloor}}{d}
> 0.
\]
\end{proof}

 \subsection{Facet minimal constructions and asymptotics of $\mathcal{F}(n,d)$}
Apart from vertex minimality, facet minimality is another important concept in combinatorial topology.  It also measures the comlexities of the triangulated manifolds. It is clear that $\mathcal{F}(1,d)=3d$, and from Proposition \ref{2sphere}, it follows that $\mathcal{F}(2,d)=4d$ for $d\geq 1$. In this section, we construct some  facet-minimal triangulations of simplicial degree maps and study the behavior of $\mathcal{F}(n,d)$ with respect to $n$ and $d$. 

We observe that for a simplicial map $f:K\to \Sp^n_{n+2}$ of degree $d\in \mathbb{Z}$, the complex $K$ must contain at least $|d|(n+2)$ facets. The following result constructs several triangulated $n$-spheres for $n\ge 3$ with exactly $d(n+2)$ facets for some large $d$, which induce simplicial maps of degree $d\geq 1$.

\begin{theorem}\label{facetminimal}
Let $n\geq 3$. For degree $d_1 = n(n+1-r) + r$, where $1 \le r \le n$, there exists a simplicial map $h_{d_1} : L_{d_1} \to \mathbb{S}^n_{n+2}$, where $L_{d_1}$ is a triangulation of the $n$-sphere consisting of exactly $d_1(n+2)$ facets, and $f_0(L_{d_1}) = 2n + d_1 - r + 4$.
Moreover, for any degree $d \ge d_1$ with $d=qn+ d_1$, where $q\ge 0$, there exists a simplicial map  $h_d : L_d \to \mathbb{S}^n_{n+2}$ of degree $d$,
where $L_d$ is a triangulation of the $n$-sphere with $d(n+2)$ facets.
\end{theorem}
\begin{proof}

We first consider $1\le r<n$. Consider an $(n-r+4)$-cycle whose vertices are labeled cyclically as 
$\{u_{\{3,1\}}, u_{\{1,1\}}, u_{\{2,1\}}, u_{\{1,2\}}, u_{\{2,2\}}, u_{\{1,3\}}, u_{\{2,3\}} \dots\}$. 
We perform a suspension over this cycle using the vertices $u_{\{4,1\}}$ and $u_{\{4,2\}}$, which yields a triangulated $2$-sphere. To construct an $n$-sphere, we then perform $(n-2)$ successive one-vertex suspensions using the vertices 
$u_{\{5,1\}}, u_{\{6,1\}}, \dots, u_{\{n+2,1\}}$ with respect to the vertex $u_{\{3,1\}}$.

Each vertex $u_{\{i,j\}}$ is assigned the color $a_i$ for $1 \le i \le n+2$. This gives a properly colored triangulation $L'$ of an $n$-sphere. We define a simplicial map 
$f':L'\to \Sp^n_{n+2}$ 
by sending every vertex of color $a_i$ to $v_i$.

\medskip

\noindent
\textbf{Case 1: $n-r+4$ is odd.}  
When $n - r + 4$ is an odd number, the simplices of the type
$\{[u_{\{1,1\}}u_{\{2,1\}}u_{\{3,1\}}u_{\{4,1\}}u_{\{5,1\}}\dots u_{\{n+1,1\}}],
[u_{\{1,2\}}u_{\{2,1\}}u_{\{3,1\}}u_{\{4,1\}}\dots u_{\{n+1,1\}}], $\\$
[u_{\{1,2\}}u_{\{2,2\}}u_{\{3,1\}}u_{\{4,1\}}\dots u_{\{n+1,1\}}], \dots,$
$[u_{\{1,\frac{n-r+3}{2}\}}u_{\{2,\frac{n-r+3}{2}\}}u_{\{3,1\}}u_{\{4,1\}}u_{\{5,1\}}\dots u_{\{n+1,1\}}],$\\$[u_{\{1,1\}}u_{\{2,1\}}u_{\{3,1\}}u_{\{4,2\}}u_{\{5,1\}}\dots u_{\{n+1,1\}}],$ $[u_{\{1,2\}}u_{\{2,1\}}u_{\{3,1\}}u_{\{4,2\}}u_{\{5,1\}}\dots u_{\{n+1,1\}}], $\\$ [u_{\{1,2\}}u_{\{2,2\}}u_{\{3,1\}}u_{\{4,2\}}u_{\{5,1\}}\dots u_{\{n+1,1\}}], \dots,$ \\ $[u_{\{1,\frac{n-r+3}{2}\}}u_{\{2,\frac{n-r+3}{2}\}}u_{\{3,1\}}u_{\{4,2\}}u_{\{5,1\}}\dots u_{\{n+1,1\}}]\}$,
all map to $[v_1 v_2 \dots v_{n+1}]$.

We observe that there are $2(n - r + 2)$ facets in $L'$ mapping to $[v_1 v_2 \dots v_{n+1}]$, among which $(n - r + 2)$ have signs opposite to that of their image facets.

Similarly, the preimage of each facet in $\Sp^n_{n+2}$ contains $2(n - r + 2)$ facets, among which $n - r + 2$ have opposite signs, except for the three facets $[v_1 v_2 v_3 v_5 v_6 \dots v_{n+2}]$, $[v_1 v_3 v_4 v_5 \dots v_{n+2}]$, and $[v_2 v_3 \dots v_{n+2}]$.

In these cases,
$f'^{-1}([v_1 v_2 v_3 v_5 v_6 \dots v_{n+2}]) = \emptyset$, $f'^{-1}([v_1 v_3 v_4 v_5 \dots v_{n+2}]) = $\\ $\{[u_{\{1,1\}}u_{\{3,1\}}u_{\{4,1\}}u_{\{5,1\}}\dots u_{\{n+2,1\}}], [u_{\{1,1\}}u_{\{3,1\}}u_{\{4,2\}}u_{\{5,1\}}\dots u_{\{n+2,1\}}]\}$,
and $\\$ $f'^{-1}([v_2 v_3 \dots v_{n+2}]) 
= \{[u_{\{2,\frac{n-r+3}{2}\}}u_{\{3,1\}}u_{\{4,1\}}u_{\{5,1\}}\dots u_{\{n+2,1\}}], $\\$
[u_{\{2,\frac{n-r+3}{2}\}}u_{\{3,1\}}u_{\{4,2\}}u_{\{5,1\}}\dots u_{\{n+2,1\}}]\}$.

\medskip

\noindent
\textbf{Case 2: $n-r+4$ is even.}  
When $n - r + 4$ is an even number, the $n$-simplices of the following type
$\{[u_{\{1,1\}}u_{\{2,1\}}u_{\{3,1\}}u_{\{4,1\}}u_{\{5,1\}}\dots u_{\{n+1,1\}}],
[u_{\{1,2\}}u_{\{2,1\}}u_{\{3,1\}}u_{\{4,1\}}u_{\{5,1\}}\dots u_{\{n+1,1\}}],$ $\\$
$[u_{\{1,2\}}u_{\{2,2\}}u_{\{3,1\}}u_{\{4,1\}}\dots u_{\{n+1,1\}}], \dots,
[u_{\{1,\frac{n-r+2}{2}\}}u_{\{2,\frac{n-r+2}{2}\}}u_{\{3,1\}}u_{\{4,1\}}\dots u_{\{n+1,1\}}],$ $\\$ $[u_{\{1,\frac{n-r+4}{2}\}}u_{\{2,\frac{n-r+2}{2}\}}u_{\{3,1\}}u_{\{4,1\}}u_{\{5,1\}}\dots u_{\{n+1,1\}}],
[u_{\{1,1\}}u_{\{2,1\}}u_{\{3,1\}}u_{\{4,2\}}u_{\{5,1\}}\dots u_{\{n+1,1\}}],$ $\\$
$[u_{\{1,2\}}u_{\{2,1\}}u_{\{3,1\}}u_{\{4,2\}}u_{\{5,1\}}\dots u_{\{n+1,1\}}],
[u_{\{1,2\}}u_{\{2,2\}}u_{\{3,1\}}u_{\{4,2\}}u_{\{5,1\}}\dots u_{\{n+1,1\}}], \dots,
$ $\\$ $[u_{\{1,\frac{n-r+2}{2}\}}u_{\{2,\frac{n-r+2}{2}\}}u_{\{3,1\}}u_{\{4,2\}}\dots u_{\{n+1,1\}}],
[u_{\{1,\frac{n-r+4}{2}\}}u_{\{2,\frac{n-r+2}{2}\}}u_{\{3,1\}}u_{\{4,2\}}\dots u_{\{n+1,1\}}]\}
$ $\\ $ map to $[v_1 v_2 \dots v_{n+1}]$.

We observe that there are $2(n - r + 2)$ facets mapping to $[v_1 v_2 \dots v_{n+1}]$, among which $(n - r + 2)$ have signs opposite to that of their image facets.

Similarly, the preimage of each facet contains $2(n - r + 2)$ facets, except for three facets. In particular,
$f'^{-1}([v_1 v_2 v_3 v_5 v_6 \dots v_{n+2}]) = \emptyset$ and $f'^{-1}([v_2 v_3 \dots v_{n+2}]) = \emptyset$.

Moreover, $f'^{-1}([v_1 v_3 v_4 v_5 \dots v_{n+2}]) =\{[u_{\{1,1\}}u_{\{3,1\}}u_{\{4,1\}}u_{\{5,1\}}\dots u_{\{n+1,1\}}],$ $\\$
$[u_{\{1,\frac{n-r+4}{2}\}}u_{\{3,1\}}u_{\{4,1\}}u_{\{5,1\}}\dots u_{\{n+1,1\}}],$
$[u_{\{1,1\}}u_{\{3,1\}}u_{\{4,2\}}u_{\{5,1\}}\dots u_{\{n+1,1\}}],$ $\\$ $
[u_{\{1,\frac{n-r+4}{2}\}}u_{\{3,1\}}u_{\{4,2\}}u_{\{5,1\}}\dots u_{\{n+1,1\}}]\}.
$

\medskip

\noindent
In both cases, we obtain $\deg(f')=0$. In total, the complex $L'$ contains 
$2(n-1)(n-r+2)+4$
$n$-simplices, among which 
$(n-1)(n-r+2)+2 =: d_1$
have signs opposite to their images.

We now eliminate these simplices via facet subdivisions. Perform facet subdivisions on these $d_1$ simplices, assigning to each new vertex the color missing from the corresponding simplex. This yields a new triangulation $L_{d_1}$ of the $n$-sphere. Define a simplicial map 
$h_{d_1}:L_{d_1}\to \Sp^n_{n+2}$ 
by sending vertices of color $a_i$ to $v_i$. 

By construction, the preimage of every simplex $\sigma\in \Sp^n_{n+2}$ now contains exactly $d_1$ $n$-simplices, all with the same sign as $\sigma$. Hence,
$\deg(h_{d_1}) = d_1 \quad \text{and} \quad f_n(L_{d_1}) = (n+2)d_1$.

\medskip

Let $d \ge n(n+1-r)+r$ with $d = qn + d_1$, where $q>0$. Consider the simplicial map of degree $n$ from Theorem~\ref{thm:mainconstruction},
$g_n: K_n \to \Sp^n_{n+2}$.
The complex $K_n$ has $n(n+2)+2$ $n$-simplices, and there exists exactly one simplex $\sigma\in K_n$ such that the sign of $g_n(\sigma)$ is opposite to the sign of $\sigma$.

Choose a simplex $\tau\in L_{d_1}$ such that $g_n(\sigma)=h_{d_1}(\tau)$ and the signs of $\sigma$ and $\tau$ are opposite. Identifying $\sigma$ and $\tau$, we form the connected sum 
$L_{d_1} \# K_n$.
By Lemma~\ref{connectedlemma}, this yields a simplicial map 
$h': L_{d_1} \# K_n \to \Sp^n_{n+2}$
of degree $d_1+n$ with 
$f_n(L_{d_1}\#K_n) = (d_1+n)(n+2)$.

Iterating this process $q-1$ times, we obtain a triangulated $n$-sphere 
$$L_d = L_{d_1} \# \underbrace{K_n \# K_n \# \cdots \# K_n}_{q\text{ times}}$$
and a simplicial map 
$h_d:L_d\to \Sp^n_{n+2}$
of degree $d=qn+d_1$ with exactly $d(n+2)$ facets.

\medskip

Now, suppose that $r=n$. 
Here $d_1=2n$. We construct a simplicial map of degree $qn$ for $q\ge 2$. Consider a $2q$-cycle with vertices 
$\{u_{\{1,1\}}, u_{\{2,1\}}, u_{\{1,2\}}, u_{\{2,2\}}, \ldots, u_{\{1,q\}}, u_{\{2,q\}}\}$.
We suspend this cycle using $u_{\{3,1\}}$ and $u_{\{4,1\}}$, and then perform $(n-2)$ successive one-vertex suspensions using 
$u_{\{5,1\}}, \ldots, u_{\{n+2,1\}}$ with respect to $u_{\{3,1\}}$. Let the resulting $n$-sphere be $L''$. Assign colors $a_i$ to $u_{\{i,j\}}$ for $1\le i\le n+2$ and define 
$f'':L''\to \Sp^n_{n+2}$ accordingly.

Then each $n$-simplex in $\Sp^n_{n+2}$ has $2q$ preimages, of which $q$ have opposite signs, except for two simplices whose preimages are empty. Hence $\deg(f'')=0$. 

Proceeding as before via facet subdivisions, we obtain a triangulated $n$-sphere $L_{qn}$ and a simplicial map 
$h_{qn}:L_{qn}\to \Sp^n_{n+2}$
of degree $qn$ with exactly $qn(n+2)$ facets.
\end{proof}

\begin{corollary}\label{facetminimal2}
  Let $a = n^2 + 1$ where $n \geq 3$. Then, for each $d > a$, there exists a simplicial map $f: K \to \Sp^n_{n+2}$ of degree $d$ such that $f_n(K) = d(n+2)$, where $K$ is a triangulated $n$-sphere.
\end{corollary}

\begin{proof}
From Theorem~\ref{facetminimal}, we have simplicial maps 
$h_{d_1}:L_{d_1}\to \Sp^n_{n+2}$ 
of degree $d_1$, where $d_1 = n(n+1-r)+r$ for $1\le r\le n$, and $f_n(L_{d_1})=d_1(n+2)$. 
Further, for each $d = qn + d_1$ with $q\ge 1$, there exists a simplicial map of degree $d$, 
$h_d:L_d\to \Sp^n_{n+2}$, satisfying $f_n(L_d)=d(n+2)$.

Let $p\ge a = n^2+1$. Then we can write 
$p = mn + s$ 
for some integers $m\ge n$ and $1\le s\le n$. If $s = n$, then 
$p = mn + n = (m+1)n$,
and hence $p$ is of the required form. Therefore, we are done.

Now suppose $s \ne n$. Consider 
$d' = n(n+1-s) + s$.  Then we can write 
$d' = q''n + s$, where $q'' = n+1-s$. Hence $s = d' - q''n$. Substituting this into the expression for $p$, we obtain
$p = mn + s = mn + (d' - q''n) = (m - q'')n + d'$. Since $m \ge n$ and $q'' = n+1-s \le n$, we have $m - q'' \ge 0$. 

Thus $p$ is of the form $p = (m-q)n + d'$ with $q \ge 0$ and $d'$ as above. Therefore, by the construction, there exists a simplicial map 
$f:K\to \Sp^n_{n+2}$ 
of degree $p$, and facets $p(n+2)$, where $K$ is a triangulated $n$-sphere.
\end{proof}

\begin{theorem}
Fix $n\geq 3$. Then $\frac{\mathcal{F}(n,d)}{d}\to n+2$ as $d\to \infty$. 
\end{theorem}

\begin{proof}
We know that for all $n$ and $d$, $\mathcal{F}(n,d) \ge d(n+2)$. On the other hand, by Corollary~\ref{facetminimal2}, we have $\mathcal{F}(n,d)=d(n+2)$ for all $d > n^2+1$. Therefore, $ \lim_{d\to\infty}\frac{\mathcal{F}(n,d)}{d}=n+2$.
\end{proof}

\begin{theorem}\label{f(n,d)/n}
    Fix $d \ge 1$, then the function $\mathcal{F}(n,d)$ grows at most linearly in $n$, and its normalized growth satisfies
$d\leq \limsup\limits_{n \to \infty} \frac{\mathcal{F}(n,d)}{n}\leq d+2$.
\end{theorem}

\begin{proof}
We know that for all $n,d$, $\mathcal{F}(n,d)\ge d(n+2)$. On the other hand, Theorem~\ref{thm:mainconstruction} gives $\mathcal{F}(n,d)\le d(n+2)+2\bigl(n-(p-1)\bigr)$, where $p$ is an integer satisfying $2\le p\le n$. From these bounds, we have
$d(n+2)\le \mathcal{F}(n,d)\le d(n+2)+2\bigl(n-(p-1)\bigr)$.
We divide throughout by $n$ to obtain $d+\frac{2d}{n}
\;\le\;
\frac{\mathcal{F}(n,d)}{n}
\;\le\;
(d+2)+\frac{2\bigl(d-(p-1)\bigr)}{n}$.
Taking the limit superior as $n\to\infty$ yields
\[
d\le \limsup_{n\to\infty} \frac{\mathcal{F}(n,d)}{n}\le d+2.
\]
\end{proof}

\begin{corollary}\label{facetminimal23}
   $ \limsup\limits_{n \to \infty} \frac{\mathcal{F}(n,d)}{n}= d+2$ for $2\leq d\leq 3$.
\end{corollary}

\begin{proof}
  For $2 \le d \le 3$, we have $\Lambda(n,d)=n+d+3$. Using Theorem~\ref{thm:mainconstruction}, we obtain $\mathcal{F}(n,d) \le (d+2)n + 2$ for $d=2,3$. On the other hand, Proposition~\ref{pr:pseudomanifold} yields $\mathcal{F}(n,d) \ge n(n+d+3) - (n+2)(n-1)$, which simplifies to $\mathcal{F}(n,d) \ge (d+2)n + 2$. Combining the upper and lower bounds, we conclude that $\mathcal{F}(n,d) = (d+2)n + 2$. In particular, $\limsup_{n \to \infty} \frac{\mathcal{F}(n,d)}{n} = d+2$.
\end{proof}

\section{A characterization of degree maps on spheres}\label{section3}

Let $K$ be a triangulated $n$-sphere and let $f:K\to \Sp^n_{n+2}$ be a simplicial map of degree $d$ for some $d\geq 0$. We know that $K$ is $(n+2)$-colorable. In \cite{Planken}, the author proved that if the $1$-skeleton of an $n$-sphere is $(n+2)$-colorable, then there exists a subdivision of the triangulated sphere in which, for every $(n-2)$-simplex, the number of incident $(n-1)$-simplices is divisible by three.

In this section, we first characterize simplicial degree maps for which the signs of the facets are preserved under the map. Using this characterization, we then describe triangulated $n$-spheres whose $1$-skeletons are $(n+2)$-colorable. In particular, we prove that there exists a subdivision of such triangulated spheres in which, for every $(n-2)$-simplex, the number of incident $(n-1)$-simplices is divisible by three, and for every $(n-3)$-simplex, the number of incident $(n-2)$-simplices is even and cannot be equal to $6$.

\begin{lemma}\label{characterization}
Fix $n\ge 2$. Let $f:K\to \Sp^n_{n+2}$ be a degree $d$ simplicial map such that $f_n(K)$ $=d(n+2)$. Let $v\in \Sp^n_{n+2}$ and let $f^{-1}(v)$ denote its preimage set. Then the following hold:

\begin{itemize}

\item[(i)] For any two vertices $x,y\in f^{-1}(v)$, we have $xy\notin K$.

\item[(ii)] The sign of $f(\sigma)$ in $\Sp^n_{n+2}$ is the same as the sign of $\sigma$ in $K$ for all facets $\sigma\in K$.

\item[(ii)] For every $m$-simplex $\sigma\in K$ with $m\le n-2$, the link $\lk(\sigma)$ has exactly $d''(n-m+1)$ faces of dimension $n-m-1$ for some $d''\le d$, and there exists a simplicial map $g$ of degree $d''$ from $\lk(\sigma)$ to $\Sp^{n-m-1}_{n-m+1}$.

\end{itemize}
\end{lemma}

\begin{proof}

Let $v\in \Sp^n_{n+2}$ and let $x,y\in f^{-1}(v)$. If $xy\in K$, then there exists a facet $\sigma$ of $K$ containing $xy$ such that $f(\sigma)$ is an $(n-1)$-dimensional or lower-dimensional face of $\Sp^n_{n+2}$, contradicting the fact that $f$ is non-degenerate. Hence $xy\notin K$, proving $(i)$.

Suppose that the sign of $f(\sigma)$ is opposite to the sign of $\sigma$ for some facet $\sigma\in K$. Then $f^{-1}(f(\sigma))$ contains more than $d$ facets, implying that $f_n(K)>d(n+2)$, a contradiction. This proves $(ii)$.

Let $\sigma\in K$ be an $m$-simplex with $m\le n-2$. Then $\lk(\sigma)$ is a triangulated $(n-m-1)$-sphere. Since $f$ is non-degenerate, $\lk(\sigma)$ must be $(n-m+1)$-colorable. Let $f(\sigma)=\tau$ where $\tau\in \Sp^n_{n+2}$. Therefore, $f$ induces a simplicial map 
$g:\lk(\sigma,K)\to \lk(\tau,\Sp^n_{n+2})$
defined by $g(x)=f(x)$ for all $x\in \lk(\sigma,K)$. The map $f$ also induces orientations on the facets of $\lk(\sigma,K)$ and $\lk(\tau,\Sp^n_{n+2})$. 

We observe that $g$ is non-degenerate. Let $\gamma$ be a facet of $\lk(\tau,\Sp^n_{n+2})$. Suppose $g^{-1}(\gamma)$ contains a facet $\beta$ whose sign is opposite to that of $\gamma$. Then $\beta\star\sigma$ is a facet of $K$ whose sign is opposite to that of $\gamma\star\tau$ in $\Sp^n_{n+2}$. This is a contradiction.
 Therefore, the sign of $g(\beta)$ is the same as the sign of $\beta$ for all facets $\beta\in \lk(\sigma)$. 

Since $\beta$ was chosen arbitrarily, we obtain $f_{n-m-1}(\lk(\sigma))=d''(n-m+1)$ for some $d''\ge 1$, and $g$ is a simplicial map of degree $d''$. This proves $(iii)$.
\end{proof}

\begin{theorem}
Let $n \geq 3$. Let $K$ be a triangulated $n$-sphere such that the one skeleton of $K$ is proper $(n+2)$-colorable. Then there exists a subdivision $K'$ of $K$ such that the links of every $(n-2)$-face  in $K'$ contain $3l$ vertices for some $l \geq 1$, and the links of each $(n-3)$-face  contain $2m+2$ vertices for some $m \geq 1$ with $m \neq 2$. 
\end{theorem}

\begin{proof}
Let $V(\Sp^n_{n+2})=\{v_1,v_2,\ldots,v_{n+2}\}$.
Let $K$ be a triangulated $n$-sphere that is properly $(n+2)$-colorable. 
Let $\{1,2,\ldots,n+2\}$ be the color set, and assign these $n+2$ colors to the vertices of $K$. 
Thus we can define a simplicial map $f:K\to \Sp^n_{n+2}$ by sending every vertex of color $i$ to $v_i$ for $1\le i\le n+2$.

Using the given orientation on $\Sp^n_{n+2}$, we orient the facets of $K$ as follows: a facet of $K$ is declared positive if the ordering of its vertices induced by the colors agrees with the orientation of the corresponding facet in $\Sp^n_{n+2}$, and negative otherwise.

Suppose the degree of the map $f$ is $d$ for some $d\geq 1$ and $f_n(K)=d(n+2)$. Then from Lemma~\ref{characterization}, the link of every $(n-2)$-face is a $3\ell$-cycle for some $\ell\ge 1$. Moreover, from Lemma~\ref{characterization} and Proposition~\ref{2sphere}, the link of every $(n-3)$-face is a $2$-sphere with $2m+2$ vertices for $m\ge 1$.

We claim that $m=2$ is not possible. Suppose $m=2$. Then there exists an $(n-3)$-simplex, say $\beta$, such that $\lk(\beta)$ is a $2$-sphere with $6$ vertices. This induces a simplicial map 
$g:\lk(\beta)\to \Sp^2_{4}$ 
of degree $2$. However, by Proposition~\ref{2sphere}, there does not exist such a map.
Therefore $m=2$ is not possible.

Let $\sigma\in \Sp^n_{n+2}$ and suppose $f^{-1}(\sigma)$ contains $d_1$ facets, say $\sigma_1,\ldots,\sigma_{d_1}$, such that the sign of $f(\sigma_i)$ is opposite to the sign of $\sigma$ in $\Sp^n_{n+2}$ for each $i$. Perform facet subdivisions on $\sigma_1,\ldots,\sigma_{d_1}$ by inserting $d_1$ new vertices in their interiors and assigning to each new vertex the color missing from the vertices of the corresponding facet.

After this step we obtain a new triangulated sphere $K_1$ with $f_0(K)+d_1$ vertices and $f_n(K)+nd_1$ facets. Moreover, we obtain a new simplicial map 
$g_1:K_1\to \Sp^n_{n+2}$ 
such that all facets in $g_1^{-1}(\sigma)$ have the same sign in $K_1$ as $\sigma$ in $\Sp^n_{n+2}$.

Now repeat this process for all facets of $\Sp^n_{n+2}$ until we obtain a triangulated sphere $L$ such that there exists a simplicial map 
$h:L\to \Sp^n_{n+2}$ 
for which the sign of $h(\eta)$ in $\Sp^n_{n+2}$ is the same as the sign of $\eta$ in $L$ for all facets $\eta\in L$. Thus the degree of the map $h$ is $d$ for some $d\ge 1$ and $f_n(L)=d(n+2)$. 

Now the result follows from the first part of the proof.
\end{proof}

\section{An application to the covering type of Moore spaces}\label{section4}

 In this section, we construct triangulations of the Moore spaces $M(\mathbb{Z}_d,n)$ using the simplicial mapping cone of simplicial degree maps between spheres. Using these triangulations, we obtain a bound on the covering type of $M(\mathbb{Z}_d,n)$. In the following result, we show the existence of such triangulations and describe their construction.

\begin{theorem}\label{pr:triangulation of moore space}
Let $K$ be a triangulation of $n$-sphere, and let  $g:K\to \mathbb{S}^n_{n+2}$ be a simplicial map of degree $d$. Then there exists a triangulation of a Moore space of type $M(\mathbb{Z}_d,n)$ with $f_0(K)+n+3$ vertices.
   \end{theorem}

\begin{proof}

Let $K_d$ be a triangulation of the $n$-sphere such that 
$g \colon K_d \rightarrow \mathbb{S}^n_{n+2}$ is a simplicial map of degree $d$. We first construct the simplicial mapping cone of the map $g$. Consider the staircase triangulation of $K_d \times I$, where 
$I := \bigl\{ \{0\}, \{1\}, \{[01]\} \bigr\}$ is a simplicial triangulation of the unit interval. Next, we take the cone over the boundary component $K_d \times \{0\}$ with a newly introduced vertex $v$. 
On the other end, for each $n$-simplex $\sigma \subset K_d$, we identify $\sigma \times \{1\}$ with its image $g(\sigma)$ in $\mathbb{S}^n_{n+2}$.
This procedure yields a simplicial complex $X$ triangulating the mapping cone. In particular, the number of vertices satisfies
$f_0(X) = f_0(K_d) + n + 3.$
From Lemma~4.2 of \cite{Tinarrage}, the simplicial mapping cone is homotopy equivalent to the mapping cone. Consequently, the simplicial complex $X$ is a triangulation of the Moore space $M(\mathbb{Z}_d,n)$.
\end{proof}

\begin{example}
  \rm{
We begin with a simplicial map $g \colon K \longrightarrow \mathbb{S}^2_4$
of degree \(3\) where $f_0(K)=8$.  Such a map exists from Proposition \ref{2sphere}. Let $V(K)=\{1,2,\dots,8\}$ and \(K\) has the following collection of facets: $\{[1,2,3],[5,6,3],[1,6,7],[1,2,8],[5,6,8],$ $[1,6,4],[1,3,4],[1,7,8],
[5,3,8],$ $[2,3,8],[6,7,8],[6,3,4]\}$. Let the simplicial map \(g\) is defined on vertices by
$\{1,5\}\mapsto 9,\quad
\{2,6\}\mapsto 10,\quad
\{3,7\}\mapsto 11,\quad
\{4,8\}\mapsto 12$.

Next, consider the product \(K \times I\), where
$I := \bigl\{\{a\},\{b\},[ab]\bigr\}$. We equip \(K \times I\) with the staircase triangulation. On the subcomplex \(K \times \{b\}\), each simplex \(\sigma \times \{b\}\) is identified with its image
\(g(\sigma)\). Let the vertex set of \(\mathbb{S}^2_4\) be \(\{9,10,11,12\}\).

On the other end, we cone off \(K \times \{a\}\) with a new vertex \(13\).
Thus, \(\mathrm{cone}(K \times \{a\})\) has the following facets: $\{[1,2,3,13],[5,6,3,13],[1,6,7,13],[1,2,8,13],$ $[5,6,8,13],
[1,6,4,13],$ $[1,3,4,13],[1,7,8,13]$, $[5,3,8,13],
[2,3,8,13],[6,7,8,13],[6,3,4,13]\}$. Finally, after attaching the simplices coming from \(K \times \{b\}\) via the map \(g\), we obtain a simplicial complex whose facets are
\[
\begin{aligned}
\{&
[1,2,3,13],[5,6,3,13],[1,6,7,13],[1,2,8,13],[5,6,8,13],
[1,6,4,13],[1,3,4,13],[1,7,8,13],\\
&[5,3,8,13],[2,3,8,13],[6,7,8,13],[6,3,4,13],
[1,9,10,11],[1,2,10,11],[1,2,3,11],[5,9,10,\\ 11],
&[5,6,10,11],
[5,6,3,11],[1,6,10,11],[1,6,7,11],[1,9,10,12],[1,2,10,12]
\}.
\end{aligned}
\]
This simplicial complex realizes the Moore space \(M(\mathbb{Z}_3,2)\).}

\end{example}

\begin{proposition}
  Let $X$ be a Moore space of type $M(\mathbb{Z}_d,1)$, and $M(\mathbb{Z}_d,2)$. Then $ct(M(\mathbb{Z}_d,1))$ $ \le 3+\left\lceil \frac{3d}{2}\right\rceil$, and  $ct(M(\mathbb{Z}_d,2))\le 4+\left\lceil \frac{3d}{2}\right\rceil$. 
\end{proposition}
\begin{proof}
Consider a $3d$-gon in which the vertices on each side are labeled as  $\{ u_{\{1,1\}}, u_{\{2,1\}},u_{\{3,1\}}, \\ u_{\{1,2\}}, u_{\{2,2\}}, u_{\{3,2\}}, \dots, u_{\{1,d\}}, u_{\{2,d\}},u_{\{3,d\}} \}$.  
Now, we introduce a $\left\lceil \frac{3d}{2} \right\rceil$-cycle inside the $3d$-gon. Label its vertices by $u_4,u_5,u_6,\dots,u_{\left(\left\lceil \frac{3d}{2} \right\rceil + 3\right)}$ in anticlockwise order. We construct a triangulation as follows. 

First, we draw two edges from two adjacent vertices of the $3d$-gon to $u_4$. Without loss of generality, assume these vertices are $u_{\{1,1\}}$ and $u_{\{2,1\}}$. We draw two edges $[u_{\{1,1\}}u_4]$ and $[u_{\{2,1\}}u_4]$. Then draw edges from the next two adjacent vertices, $u_{\{3,1\}}$ and $u_{\{1,2\}}$ to the vertex $u_5$. The edges are $[u_{\{3,1\}}u_5]$ and $[u_{\{1,2\}}u_5]$. After adding edges, we obtain two triangles $[u_{\{1,1\}}u_{\{2,1\}}u_4]$, and $[u_{\{1,2\}}u_{\{3,1\}}u_4]$, and a quadrilateral $[u_{\{1,2\}}u_{\{1,2\}}u_4u_5]$. Then again we add edges between the next two adjacent vetrtices to the vertex $u_6$. We follow this process until we reach the vertex $u_{\left(\left\lceil \frac{3d}{2} \right\rceil + 3\right)}$.

If $d$ is even, then two vertices $u_{\{2,d\}}$, and $u_{\{3,d\}}$ of the $3d$-gon remain that are not adjacent to the vertices $u_4,u_5,\dots u_{\left(\left\lceil \frac{3d}{2} \right\rceil+3\right)}$. We draw edges from $u_{\{2,d\}}$, and $u_{\{3,d\}}$ to $u_{\left(\left\lceil \frac{3d}{2} \right\rceil+3\right)}$.

If $d$ is odd, then one vertex $u_{\{3,d\}}$ of the $3d$-gon remains that is not adjacent to the vertices $\{u_4,u_5,\dots u_{\left(\left\lceil \frac{3d}{2} \right\rceil+3\right)}\}$. We draw an edge from $u_{\{3,d\}}$ and also from the vertex $u_{\{1,1\}}$  to $u_{\left(\left\lceil \frac{3d}{2} \right\rceil+3\right)}$.

Now we are left with $\left\lfloor \frac{3d}{2} \right\rfloor$ quadrilaterals, and the vertices of each quadrilateral are different. To triangulate the quadrilaterals, we draw a diagonal in each quadrilateral and make sure they follow the same pattern, either from top-left to bottom-right or from top-right to bottom-left. Inside the $\left\lceil \frac{3d}{2} \right\rceil$-gon, we add edges to triangulate the interior without adding new vertices.

Now, we identify all the edges of type $u_{{i,j}}u_{{i+1,j}}$ for $i = 1,2$, and $u_{{3,j}}u_{{1,j+1}}$, where $1 \le j \le d$, with indices taken modulo $d$.
 After performing these identifications, the resulting topological space $X$ has a fundamental group $\mathbb{Z}_d$, i.e., $\widetilde{H}_1(X;\mathbb{Z})\cong \mathbb{Z}_d$ and $\widetilde{H}_i(X;\mathbb{Z})=0$ for $i\neq 1$.
This gives a triangulation $K$ of the Moore space $M(\mathbb{Z}_d,1)$ with 
$f_0(X)=\left(\left\lceil \frac{3d}{2} \right\rceil + 3\right)$. Therefore,
$ct(M(\mathbb{Z}_d,1)) \le 3 + \left\lceil \frac{3d}{2} \right\rceil$.

To construct $M(\mathbb{Z}_d,2)$, we perform a one-vertex suspension on $K$.  
This produces a simplicial complex $Y$ that is homotopy equivalent to $M(\mathbb{Z}_d,2)$. 
Therefore, $ct(M(\mathbb{Z}_d,2)) \le 4 + \left\lceil \frac{3d}{2} \right\rceil$. 
\end{proof}

A similar bound in the above proposition was obtained by Raphaël Tinarrage (personal communication).

\begin{proposition}\label{Pr:covering type of moore space}
 Let $X$ be a Moore space of type $M(\mathbb{Z}_d,n)$, where $n\ge 3$ and $d\geq 1$. Then the covering type of $X$ satisfies the following bounds:
\begin{enumerate}[(a)]
  \item $\mathrm{ct}(X) \le 2n + d + 2l + 4$ if $d = ln + 1$, where $l \ge 0$ is an integer.
  \item $\mathrm{ct}(X) \le 2n + d + 2l + 6$ if $d = ln + p$, where $l \ge 0$ is an integer and $2 \le p \le i$.
\end{enumerate}
\end{proposition}
\begin{proof}
   To construct the Moore space $M(\mathbb{Z}_d,n)$, we use the simplicial mapping cone construction. We begin by considering the simplicial map of degree $d$ given in Theorem~\ref{thm:mainconstruction}, $g_d \colon K_d \rightarrow \mathbb{S}^n_{n+2}$, where $K_d$ is a triangulation of the $n$-sphere. By applying Theorem~\ref{pr:triangulation of moore space}, we obtain a Moore space of type $M(\mathbb{Z}_d,n)$ with $f_0(K_d)+n+3$ vertices. We know that $f_0(K_d)\leq \Lambda(n,d)$. Now the result follows from Proposition~\ref{Pr:covering type of moore space} and Corollary~\ref{them:upperbound}.
\end{proof}


\begin{proposition}\label{pr2:Moore Spaces}
   Let $X$ be a Moore space of type $M(\mathbb{Z}^r\oplus\mathbb{Z}_{d_1} 
\oplus\cdots\oplus \mathbb{Z}_{d_m}
,n)$,  $k_0 = min\{ x \geq 0, \binom {n+x}
{n+1} \ge r\}$. Then the covering type  of $X$ satisfies the following bounds: 
\begin{enumerate}[(a)]
 \item $\mathrm{ct}(X) \le k_0 + n + 1 + mn + 5m + k_1 + k_2 + \cdots + k_m + 2(l_1 + l_2 + \cdots + l_m)$, whenever $d_i = l_i n + p_i$ for $1 \le i \le m$, where $l_i$ and $p_i$ are nonnegative integers satisfying $2 \le p_i \le n$.

  \item $\mathrm{ct}(X) \le k_0+n+1+3m+(1+\frac{2}{n})(k_1+k_2+\cdots k_m)+mn(1-\frac{2}{n})$ if $d_i = l_in + 1$  for $1 \le i \le m$, where $l_i\ge 0$ is an integer.   
  \end{enumerate}
\end{proposition}
\begin{proof}
The Moore space $M(\mathbb{Z}^r \oplus \mathbb{Z}_{d_1} \oplus \cdots \oplus \mathbb{Z}_{d_m}, n)$ admits the decomposition
$M(\mathbb{Z}^r \oplus \mathbb{Z}_{d_1} \oplus \cdots \oplus \mathbb{Z}_{d_m}, n)
\simeq M(\mathbb{Z}^r,n) \vee M(\mathbb{Z}_{d_1},n) \vee \cdots \vee M(\mathbb{Z}_{d_m},n).
$ By \cite[Theorem~4.1 and Proposition~4.4]{Govc}, we have 
$\mathrm{ct}\bigl(M(\mathbb{Z}^r,n)\bigr)= k_0 + n + 1$, where $k_0 = \min \left\{ x \ge 0 \,\middle|\, \binom{n+x}{n+1} \ge r \right\}.$
Moreover, for spaces $X$ and $Y$, $\mathrm{ct}(X \vee Y)\le \mathrm{ct}(X) + \mathrm{ct}(Y)- \min\{\mathrm{hdim}(X), \mathrm{hdim}(Y)\} - 1,$
where $\mathrm{hdim}$ denotes the homotopy dimension of the space. 
Now, using Proposition~\ref{Pr:covering type of moore space} for 
$M(\mathbb{Z}_{d_1},n),\ldots, M(\mathbb{Z}_{d_m},n)$, we obtain the required bounds.
\end{proof}


\section{Future Directions}\label{section5}
Constructing manifold triangulations and determining minimal ones have been central themes in combinatorial topology. In this article, we present several constructions of triangulated $n$-spheres that induce simplicial maps of degree $d$ to the standard $n$-sphere for $d\in \mathbb{Z}$ and $n\geq 1$. Our focus has been on both vertex minimality, i.e., the values of $\Lambda(n,d)$, and facet minimality, i.e., the values of $\mathcal{F}(n,d)$, along with their asymptotic behavior with respect to $n$ and $d$.

We also refine the characterization of triangulated $n$-spheres whose $1$-skeleton is $(n+2)$-colorable. Furthermore, we employ the technique of the simplicial mapping cone to construct Moore spaces of a given type from simplicial degree maps between spheres, thereby improving bounds on the covering type of Moore spaces. The results obtained, together with the methods and observations developed in this work, suggest several promising directions for future research.

In subsection \ref{subsection 1}, in the case where no restriction is imposed on the degeneracy of the degree maps, it follows from \cite{Ryabichev} that 
$\limsup_{d\to\infty}\frac{\lambda(n,d)}{d}=0$. The author uses the join operation, which works particularly well because of its ability to maximize the number of facets without significantly increasing the number of vertices of the triangulated sphere. However, the join operation breaks the non-degeneracy of the degree map.

In the case of non-degenerate maps, it is easy to check from Corollary~\ref{them:upperbound} that 
$\frac{\Lambda(n,d)}{d}\to c$ for some constant $c\le 4$ as $n\to\infty$. Moreover, non-degeneracy induces a proper coloring on the triangulated sphere when the simplicial map has non-zero degree. At present, there appears to be no known operation on triangulated spheres that increases the degree of the map quadratically (or faster) while increasing the number of vertices only linearly. This naturally leads to the following question.

\begin{question}
Is $\limsup_{d\to\infty}\frac{\Lambda(n,d)}{d}\neq 0$?
\end{question}

We observe that for $n\ge 3$, $\Lambda(n,d)\le n+d+3$ for $d\le n+1$, and equality holds when $d=2,3$.  Assuming non-degeneracy on degree maps further suggests the following conjecture. 

\begin{conj}\label{conj:n+d+3}
Fix $n\ge 3$. Then $\Lambda(n,d)=n+d+3$ for $d\le n+1$.
\end{conj}

Let $n \geq 3$. From Theorem~\ref{thm:mainconstruction}, Theorem~\ref{facetminimal}, and Corollary~\ref{facetminimal2}, we obtain simplicial degree maps 
$f: K \to \Sp^n_{n+2}$ 
of degree $d$, where $K$ is a triangulated $n$-sphere satisfying $f_n(K) = d(n+2)$, for the following values of $d$:
\begin{itemize}
    \item $d = ln + 1$, for $l \geq 1$,
    \item $d = n(n+1-r) + r$, for $1 \le r \le n$,
    \item $d > n^2 + 1$.
\end{itemize}

Now, consider a simplicial degree $p$ map $q: L \to \Sp^n_{n+2}$ given by Theorem~\ref{thm:mainconstruction}, where $p \le n$. In this case, there exist simplices in $L$ whose orientation (sign) changes under the map $q$ in $\Sp^n_{n+2}$. We observe that such maps can be used to form connected sums with facet-minimal degree maps. Using this idea, one can construct facet-minimal simplicial degree maps for certain values of $d$ between $n+1$ and $n^2+1$, excluding those of the form $ln+1$ (for $l \geq 2$) and $n(n+1-r)+r$ (for $1 \le r \le n$). This observation leads to the following conjecture.

\begin{conj}
Fix $n \geq 3$. For every integer $d$ with $n+2 \le d \le n^2$, there exists a simplicial degree map 
$f: K \to \Sp^n_{n+2}$ 
of degree $d$, where $K$ is a triangulated $n$-sphere satisfying $f_n(K) = d(n+2)$.
\end{conj}

It is also natural to ask whether facet-minimal degree maps are realizable for degrees $d \le n$.

In Theorem~\ref{f(n,d)/n}, we proved that
$d \leq \limsup\limits_{n \to \infty} \frac{\mathcal{F}(n,d)}{n} \leq d+2$. In Corollary~\ref{facetminimal23}, we showed that $\limsup\limits_{n \to \infty} \frac{\mathcal{F}(n,d)}{n} = d+2$ for $d=2,3$. This motivates the following conjecture.

\begin{conj} Let $d\geq 4$. Then
$\limsup\limits_{n \to \infty} \frac{\mathcal{F}(n,d)}{n} = d+2$.
\end{conj}

Assume that Conjecture~\ref{conj:n+d+3} holds. Then $\Lambda(n,d) = n+d+3$ for $d \leq n$. Hence, by Proposition~\ref{pr:pseudomanifold}, we obtain $\mathcal{F}(n,d) \geq n(n+d+3) - (n+2)(n-1) = n(d+2) + 2$ for $d \leq n$.
Combining this lower bound with the inequality $d \leq \limsup\limits_{n \to \infty} \frac{\mathcal{F}(n,d)}{n} \leq d+2$, we conclude that $\limsup\limits_{n \to \infty} \frac{\mathcal{F}(n,d)}{n} = d+2$.

Apart from studying the face numbers and their asymptotic behavior for simplicial degree maps, this article raises the following important observation. In \cite{Madahar2001}, the author constructed simplicial degree maps for triangulated $2$-spheres, where most of the constructions use stacked spheres. In Theorem~\ref{2sphere}, we construct non-stacked spheres that induce simplicial degree maps for all even degrees $\geq 2$. In higher dimensions, the constructions given in Theorem~\ref{thm:mainconstruction} yield only stacked spheres, whereas those in Theorem~\ref{facetminimal} produce non-stacked spheres. This suggests that, for a given simplicial degree $d$ map $f:K \to \Sp^n_{n+2}$, the triangulated $n$-sphere $K$ may be either stacked or non-stacked.
Another observation is that all constructions of simplicial degree maps in this article, as well as in \cite{Apolonskaya, Madahar2001, Musin, Ryabichev}, yield polytopal spheres. This leads to the following natural question.

\begin{question}
Fix $n \geq 3$ and $d \geq 1$. Let $K$ be a triangulated $n$-sphere, and let $f:K \to \Sp^n_{n+2}$ be a simplicial map of degree $d$. Must $K$ be a polytopal sphere?
\end{question}

\smallskip

 \noindent {\bf Acknowledgement:} We would like to thank Raphaël Tinarrage for discussing a possible application of degree maps to the covering type of Moore spaces and for suggesting possible improvements to the bounds given in \cite{Govc}. The second author is supported by the 
  Ministry of Education of the Slovak Republic under grant VEGA 2/0056/25.
   The institute fellowship at IIT Delhi, India, supports the third author.

\smallskip


\begin{thebibliography}{999}
\bibitem{Amann} M. Amann, Degrees of Self-Maps of Simply Connected Manifolds, {\em Int. Math. Res. Not. IMRN}, {\bf 18} (2015), 8545--8589.


\bibitem{Apolonskaya}
 K. Apolonskaya, and O.R. Musin, Minimal simplicial spherical mappings with a given degree, 2025, arXiv preprint.

\bibitem{Bagchi} B. Bagchi and 
B. Datta, Lower bound theorem for normal pseudomanifolds, {\em Expo. Math.} {\bf 26(4)} (2008), 327--351.

\bibitem{BasakTrivedi}
B. Basak, and A. Trivedi, Minimal Simplicial Degree $d$ Maps from Genus $g$ Surfaces to the Torus, {\em Vietnam Journal of Mathematics}, 14 pages, 2025.

 \bibitem{Borghini} E. Borghini and E. G. Minian, The covering type of closed surfaces and minimal triangulations, {\em J. Combin.
Theory Ser. A},{\bf 166} (2019), 1--10.





\bibitem{Epstein1966} D. B. A. Epstein, The degree of a map, {\em Proc. London Math. Soc. (3)}, {\bf 16} (1966), 369--383.
 
\bibitem{Fan1967} K. Fan, Simplicial maps from an orientable $n$-pseudomanifold into $\Sp^{m}$ with the octahedral triangulation, {\em J. Combinatorial Theory}, {\bf 2} (1967), 588--602.


\bibitem{Govc}
D. Govc, W. Marzantowicz and P. Pave\v si\'c, Estimates of covering type and the number of vertices of minimal triangulations, Discrete Comput. Geom. {\bf 63(1)} (2020), 31--48.

\bibitem{Hatcher2002} A. Hatcher, Algebraic topology,  Cambridge University Press (2002).

\bibitem{Hopf1928}  H. Hopf, Zur Topologie der Abbildungen von Mannigfaltigkeiten  I, {\em Math. Ann.}, {\bf 100 (1)} (1928), 579--608.

\bibitem{Hopf1930} H. Hopf, Zur Topologie der Abbildungen von Mannigfaltigkeiten II, {\em Math. Ann.}, {\bf 102 (1)}
 (1930), 562--623.


\bibitem{Karoubi} M. Karoubi and C. Weibel, On the covering type of a space, {\em Enseign. Math.}, {\bf 62 (3--4)} (2016) 457–474.

\bibitem{Madahar2001} K. V. Madahar and K. S. Sarkaria, Minimal simplicial self-maps of the 2-sphere, {\em Geom. Dedicata}, {\bf 84 (1-3)} (2001), 25--33.


\bibitem{Milizia}
 F. Milizia, Simplicial maps between spheres and Davis' manifolds with positive simplicial volume, 2024, arXiv.


\bibitem{mullen} P. McMullen,
The maximum numbers of faces of a convex polytope, {\em Mathematika}, {\bf 17} (1970), 179--184.

\bibitem{Musin}
O.R. Musin, Homotopy groups and quantitative Sperner-type lemma, 2020, arXiv:2007.08715.

\bibitem{Neofytidis} C. Neofytidis, S. Wang, and Z. Wang, Realising sets of integers as mapping degree sets, {\em Bull. Lond. Math. Soc.}, {\bf 55 (4)} (2023), 1700--1717.

\bibitem{Neofytidis2} C. Neofytidis, H. Sun, Y. Tian, S. Wang, and Z. Wang, On the realisation problem for mapping degree sets, {\em Proc. Amer. Math. Soc.} {\bf 152 (4)} (2024), 1769--1776

\bibitem{Olum1953} P. Olum, Mappings of manifolds and the notion of degree, {\em Ann. of Math. (2)}, {\bf 58} (1953), 458--480.

\bibitem{Planken}
 T. Planken, Colouring the 1-skeleton of $d$-dimensional triangulations. {\em Innovations in Graph Theory}, Volume 2 (2025), pp. 275-299.




\bibitem{Ryabichev2024} A. Ryabichev, Maximal degree of a map of surfaces, {\em Pacific J. Math}, {\bf 328 (1)} (2024), 145--156.

\bibitem{Ryabichev}
A. Ryabichev, On vertex-minimal simplicial maps to the sphere, 2025, arXiv:2512.01137.


\bibitem{Tinarrage}
R. Tinarrage, Simplicial approximation to CW complexes in practice, 2022, arXiv:2112.07573v2.


\end{thebibliography}
\end{document}